\documentclass[oneside,reqno,12pt]{amsart}
 
\usepackage{amsmath, amsfonts, amsthm, amssymb}
\usepackage[english]{babel}
   
\usepackage[utf8]{inputenc}
\usepackage{lmodern}
\usepackage[x11names]{xcolor} 
\usepackage{graphicx}
\usepackage{float}
\usepackage{braket} 
\setcounter{tocdepth}{4}
\setcounter{secnumdepth}{4}
\usepackage[stable, bottom]{footmisc}
\usepackage{wrapfig}
\usepackage{sidecap} 
 \usepackage[margin=2.75cm]{geometry}
 \usepackage{scalerel}
 \usepackage{fontawesome}
  
  \newenvironment{customthm}[1]
  {\innercustomthm}
  {\endinnercustomthm}
 
\linespread{1.2}
\usepackage{mathtools} 
\usepackage[shortlabels]{enumitem}
\usepackage{caption}
 
\usepackage{ textcomp } 
\usepackage{relsize}
 \usepackage{csquotes}
  
  \usepackage{palatino}
 \usepackage{lettrine}
 \usepackage{erewhon}
 \usepackage{GoudyIn}
 
 \usepackage{hyperref}
 \hypersetup{
	colorlinks=true,
	linkcolor=Red4, 
	citecolor=blue
 }

\newcommand{\scd}{\mathcal{S}(\C^d)}

\newcommand{\C}{{\mathbb C}}
\newcommand{\N}{{\mathbb N}}

\newcommand{\aand}{\textrm{\ \  and \  }}

\newcommand{\tr}{\operatorname{tr}}

\newcommand{\spann}{\operatorname{span}}

\definecolor{darkred} {cmyk}{0 , 1 ,1 , 0.4 }

 \usepackage{tikz}
 \usetikzlibrary{arrows}
 %\usetikzlibrary{shapes}

 \usetikzlibrary{decorations.pathmorphing}
 \usetikzlibrary{positioning}

\usepackage{epstopdf}
\newtheorem{theorem}{Theorem}
\newtheorem*{theorem*}{Theorem}
\newtheorem{corollary}[theorem]{Corollary}
\newtheorem*{corollary*}{Corollary}
\newtheorem{lemma}[theorem]{Lemma}
\newtheorem*{lemma*}{Lemma}

\newtheorem{proposition}[theorem]{Proposition}
\newtheorem{definition}[theorem]{Definition}
\newtheorem*{definition*}{Definition}
\theoremstyle{definition}

\newtheorem*{remark*}{Remark}
\newtheorem{obs}[theorem]{Observation}
\newtheorem*{obs*}{Observation}
\newtheorem{ex}[theorem]{Example}
\newtheorem*{ex*}{Example}

\usepackage[backend=bibtex,firstinits=true,citestyle=numeric,bibstyle=numeric,maxbibnames =100,maxcitenames=5,isbn=false]{biblatex}
\addbibresource{kusuoka.bib} 
\renewbibmacro{in:}{}
\DeclareFieldFormat{pages}{#1}

\NewDocumentCommand{\overarrow}{O{=} O{\downarrow} m}{%
	\overset{\makebox[0pt]{\begin{tabular}{@{}c@{}}#3\\[0pt]\ensuremath{#2}\end{tabular}}}{#1}
}
\NewDocumentCommand{\underarrow}{O{=} O{\downarrow} m}{%
	\underset{\makebox[0pt]{\begin{tabular}{@{}c@{}}\ensuremath{#2}\\[0pt]#3\end{tabular}}}{#1}
}
\DeclareMathSymbol{\shortminus}{\mathbin}{AMSa}{"39}

\begin{document}

  \title{      
  Ergodicity of Kusuoka measures on quantum trajectories
}
   \author{
   	Anna Szczepanek
   }
   \address{Institute of Mathematics, Jagiellonian University, \L ojasiewicza 6, 30-348 Krak\'{o}w, Poland}
     \email{ anna.szczepanek@uj.edu.pl}

  \begin{abstract}
In 1989 Kusuoka started the study of probability measures on the shift space that are defined with the help of  products of matrices. In particular, he derived a sufficient condition for the ergodicity of such measures, which have since been referred to as Kusuoka measures. 
We observe that repeated measurements  of a unitarily evolving quantum system generate a Kusuoka measure on the space of sequences of measurement outcomes. We show that if the measurement consists of scaled projections, then Kusuoka's sufficient ergodicity condition can be significantly simplified. We then prove that this condition is also necessary for ergodicity 
if the measurement consists of
uniformly scaled rank-$1$ projections  (i.e., it is a rank-$1$ POVM), or of exactly two projections, one of which is rank-$1$. For the latter class of measurements we also show that the Kusuoka measure is reversible in the sense that every string of outcomes  has the same probability of being emitted by the system as its reverse.

 	\vspace{1.5mm}
  	
  	\noindent
  	\textsc{Keywords}: Kusuoka measures, ergodicity, symbolic dynamics, unitary matrices, quantum information

 	\vspace{1.5mm}
  	
  	\noindent
  	\textsc{MSC2020}:  37A25, 37B10,  81P45

 	\vspace{-2mm}
  	
  \end{abstract}
   \maketitle 
 
   \section{Introduction \& Preliminaries}
 
 \thispagestyle{empty}
  
 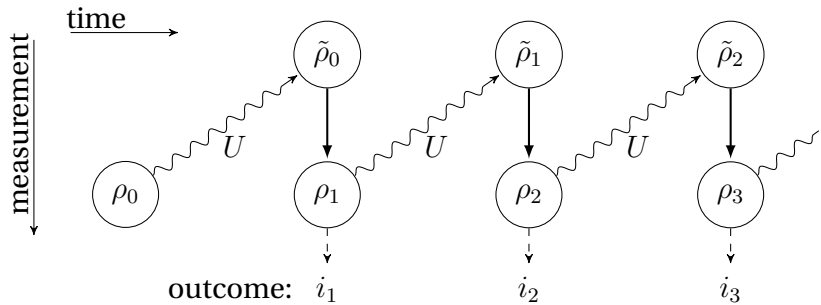
\begin{figure}[b]
 	\vspace{-3mm}
 	\captionsetup{width=0.8\linewidth}
 	\begin{center}
 		\scalebox{0.975}{		\begin{tikzpicture}
 			[>=stealth',shorten >=1pt,auto,node distance=0.995cm,
 			bubbles/.style={circle,font=\sffamily\normalsize,  minimum size=0.9cm}]
 			Bottom
 			\node[bubbles] (st33) at (-1.25, 0) { };
 			\node[bubbles,draw] (st0) at (1.5, 0) {$\tilde{\rho}_0$};
 			\node[bubbles,draw] (st1) at (4.25, 0) {$\tilde{\rho}_1$};
 			\node[bubbles,draw] (st2) at (7, 0) {$\tilde{\rho}_2$};
 			\node[bubbles] (st3) at (8.6, -0.75) {};
 			
 			\node[bubbles,draw] (p33) [below=of st33] {${\rho}_0$};
 			\node[bubbles,draw] (p0) [below=of st0] {${\rho}_1$};
 			\node[bubbles,draw] (p1) [below=of st1] {${\rho}_2$};
 			\node[bubbles,draw] (p2) [below=of st2] {${\rho}_3$};
 			
 			Top
 			\node (w0) at (1.5, -3.2)  {$i_1$};
 			\node (w1) at (4.25, -3.2) {$i_2$};
 			\node (w2) at (7, -3.2) {$i_3$};
 			
 			\path[->,every node/.style={font=\sffamily\normalsize}]	
 			(p33) edge[decorate, decoration={snake, segment length=3mm, amplitude=0.75mm,post length=2mm}] node[below]  {$\ \ U$} (st0)
 			(p0) edge[decorate, decoration={snake, segment length=3mm, amplitude=0.75mm,post length=2mm}] node[below]  {$\ \  U$} (st1)
 			(p1)  edge[decorate, decoration={snake, segment length=3mm, amplitude=0.75mm,post length=2mm}] node[below]  {$\ \ U$} (st2);
 			
 			\path[every node/.style={font=\sffamily\normalsize}]	
 			(p2)  edge[decorate, decoration={snake, segment length=3mm, amplitude=0.75mm}] node[below]  {} (st3);
 			
 			\path[->,dashed,every node/.style={font=\sffamily\normalsize}]	
 			(p0) edge  (w0)
 			(p1) edge  (w1)
 			(p2) edge  (w2);
 			
 			Arrows
 			\draw [-latex,thick](st0) -- (p0);
 			\draw [-latex,thick](st1) -- (p1);
 			\draw [-latex,thick](st2) -- (p2);

 			\draw[->] (-2,0.3) -- (-0.5,0.3);
 
			\node (time) at (-1.65, 0.5) {\normalsize{time}};
 			\node (res) at (0.4, -3.2) {\normalsize{outcome:$\ \ \ \ \ $}};
 			\node[rotate=90]  (res) at (-2.7, -0.95) {\normalsize{measurement}};
 			\draw[->] (-2.5 ,0.2) -- (-2.5,-2.5);
 			\end{tikzpicture}}	
 	\end{center}
  	\vspace{-2mm} 
		\captionsetup{width=0.7\linewidth} 
%	\captionsetup{width=0.925\linewidth} 	
 	\caption{Repeatedly measured quantum system that between each two consecutive  measurements evolves in accordance to a unitary operator  $\,U$.  State dynamics  $(\rho_0, \rho_1, \ldots)$ is Markovian, outcome dynamics  $(i_1, i_2, \ldots, )$ need not be  Markovian.}
 	\label{fig1} 
 \end{figure}
  
Consider  successive (isochronous) measurements  on a $d$-dimensional \mbox{($d \geq 2$)} quan\-tum-me\-cha\-ni\-cal system that between two
subsequent measurements undergoes deterministic time evolution governed by a unitary operator $U$ (see Fig.~\ref{fig1}). Such a procedure results in the system emitting a sequence of measurement outcomes from $I_k:=\{1, \ldots, k\}$, while the joint evolution of the system can be modelled by a \emph{Partial Iterated Function System} (PIFS). 
 
\begin{definition}\textrm{\rm\cite[p. 59]{Slo03}} 
	The triple $\,(X, \mathsf{F}_i, \mathsf{p}_i)_{i \in I_k}$ is called  a \emph{partial iterated function system (PIFS)} on $X$ if 
	$\hspace{1mm} \mathsf{p}_i\colon \hspace{-0.5mm} X \hspace{-0.5mm}\to [0,1]$,   $\,\sum_{j \in I_k}\mathsf{p}_j=1$, and  $\,\mathsf{F}_i\colon  \{x\hspace{-0.25mm} \in \hspace{-0.25mm}X \hspace{-0.5mm}\colon  \mathsf{p}_i(x)> 0\} \to X$, where $\hspace{0.5mm}i \hspace{-0.25mm}\in \hspace{-0.25mm}I_k$. 
	\end{definition}
  
 \noindent
 Under the action of a PIFS, a given initial state $x \in\hspace{-0.25mm} X$ is transformed into a new state  $\mathsf{F}_i(x)$ with (place-dependent) probability $\mathsf{p}_i(x)$ and the symbol 
$i$ 
corresponding to this evolution is emitted, $i \in I_k$. 
The repeated action of a PIFS generates a Markov chain on $X$ and 
yields sequences of symbols from $I_k$, which can be modelled by a hidden Markov chain. The 
probability and evolution functions related  to these strings are defined inductively in the following natural way. 
Let $n \in \mathbb{N}$, $\iota := (i_1, \ldots, i_n) \in I_k^{n}$ and $j \in I_k$. For $n=1$ both~$\mathsf{p}_\iota$~and~$\mathsf{F}_\iota$ are given. The probability of the system 
outputting 
 $\iota j:= (i_1, \ldots, i_n, j) \in I_k^{n+1}$ is defined  as  
\begin{equation} 
	\label{genprob} \mathsf{p}_{\iota j}(x):=
	\left\lbrace\begin{array}{ll}
		\mathsf{p}_j(\mathsf{F}_{\iota}(x))\mathsf{p}_{\iota}(x) & \textrm{ if }\  \mathsf{p}_{\iota}(x) > 0 \\[0.15em]
		0 & \textrm{ if }\ \mathsf{p}_{\iota}(x) = 0	\end{array}\right.
\end{equation}
and the corresponding evolution map is defined as 
$ \mathsf{F}_{\iota j}(x):=\mathsf{F}_j(\mathsf{F}_{\iota} (x))$ if  $\mathsf{p}_\iota(x) >0$.
Obviously, we have the total probability formula 
\begin{equation}
 	\vspace{0.5mm}
	 \label{totalprob}\mathsf{p}_{{\iota}}(x)=
	\sum_{j \in I_k} \mathsf{p}_{\iota j}(x).
\end{equation}

\begin{remark*}
	The notion of a PIFS generalizes, slightly but significantly, that of an Iterated Function System (IFS) with place-dependent probabilities (see, e.g., \cite{Barnsley1988, peigne1993iterated}) by allowing the evolution map $\mathsf{F}_i$ to remain undefined on the states that have zero probability of being subject to the action of $\mathsf{F}_i$, $i \in I_k$. Such a generalization is necessary in considering quantum measurements, because the evolution associated with a given measurement  outcome 
	cannot be defined on the states with zero probability of producing this outcome, see \eqref{instr}.
\end{remark*}

From this point forward we restrict our attention to PIFSs acting on the set of  quantum states  
 $\:\mathcal{S}(\C^d) \hspace{-0.25mm}:=\hspace{-0.25mm}  \{\rho\hspace{-0.25mm} \in\hspace{-0.25mm} \mathcal{L}(\C^d)\colon  \ \rho \geq 0,\  \tr\rho = 1\}$, where $\mathcal{L}(\C^d)$ denotes the space of (bounded) linear maps on $\,\C^d$. 
The Markov chain generated by such a PIFS on $\hspace{0.25mm}\scd$ corresponds to the so-called  \emph{discrete quantum trajectories}, see, e.g.,
\cite{AttPel11,Benetal17,Kum06,Lim10,MaaKum06}, and 
the sequences of emitted symbols, interpreted as measurement outcomes, form what we can call \emph{coarse-grained quantum trajectories}.
 The study of symbolic dynamics generated by quantum dynamical systems   goes back to \cite{BecGra92, QuantumChaos}, see also 
	\cite{Hartle2, Hartle1}.  
In this paper we employ PIFSs to model repeated measurements performed on unitarily evolving quantum systems and focus on the probability measures that such systems induce on the shift space.

\smallskip

Let us recall the basic mathematical framework of quantum mechanics. A \emph{measurement} of a $d$-dimensional quantum system with $k\hspace{-0.5mm} \in\hspace{-0.5mm} \N$ possible outcomes is given 
	by a \emph{positive operator-valued measure} (\emph{POVM}), i.e., a set of positive semi-definite (non-zero)   operators $\Pi_{1}, \ldots, \Pi_k$ 
	on $\mathbb{C}^{d}$ that sum to the identity, i.e., 
	\begin{equation} 
 	\vspace{-0.5mm}
	\label{sumid}\sum_{j\in I_k}\Pi_{j}=\mathbb{I}.\end{equation}
 
\noindent
 We distinguish  {two} special classes of measurements: 
\begin{itemize}[ leftmargin=*]\itemsep=1mm 
	\item  $\Pi$ is called   a \emph{projection valued measure} (\emph{PVM}) or a \emph{\mbox{L\"{u}ders--von Neumann} measurement$\,$} \cite{Luders} if $\,\Pi_i$ is a projection for every $i\in I_k$. We then have $k \leq d\,$ and the projections constituting $\Pi$ are necessarily orthogonal as self-adjoint projections on a Hilbert space. 
	Moreover, they are mutually orthogonal, i.e.,   $\Pi_i\Pi_j=0$ for $i,j \in I_k$,  $i \neq j$ \cite[p. 46]{halmos1957}.
	
	\item    $\Pi$ is   called a 
	(\emph{normalised}) \emph{rank-$1$ POVM} if   $\hspace{0.25mm}\operatorname{tr}\Pi
	_{1} \hspace{-0.5mm} = \hspace{-0.5mm}\operatorname{tr}\Pi
	_{2}\hspace{-0.5mm} = \hspace{-0.5mm} \ldots \hspace{-0.5mm} = \hspace{-0.5mm} \operatorname{tr}\Pi
	_{k}$ and $\operatorname{rank}\hspace{-0.25mm} \Pi_{i}\hspace{-0.5mm}=\hspace{-0.5mm}1$ for every $i\in I_k$. Then, necessarily,  	$\operatorname{tr}\Pi
_{i}\hspace{-0.375mm} = \hspace{-0.375mm} \frac dk$ for every  $i\in\hspace{-0.375mm} I_k$ and  $k \geq d$. It   follows   that there exist 
 unit vectors $\,{\varphi_{1}}, \ldots,  {\varphi_{k}} \in \mathbb{C}^{d}$   associated with $\Pi$ via 
	$\Pi_{i}= 
\frac dk    \rho_{i}
	$, where $\rho_{i}$ is an orthogonal projection on $\spann\{\varphi_i\}$, $i \in I_k$.

\end{itemize}  

 \smallskip

If the state of the system before the measurement  
is  $\rho\in\mathcal{S}(\mathbb{C}^{d})  $, then the \emph{Born rule} \label{BORNRULE} dictates that the probability
  of obtaining the $i$-th outcome ($i\! \in \!  I_k$) is given by 
$\operatorname{tr}( \Pi_{i} \rho)
$ \cite{Born1926}. \linebreak
The measurement process generically alters the state
of the system, but the POVM alone is not sufficient to determine the
post-measurement  state. This can be done by defining a
\textsl{measurement instrument} (in the sense of Davies and Lewis
\cite{DavLew70}) compatible with $\Pi$, see also \cite{compendium}, \cite[Ch. 10]{busch2016}, \cite[Ch. 5]{HeiZim11}. 
We consider   the   \textsl{generalised L\"{u}ders instruments}, disturbing the initial state in the minimal way, where the input-output state transformation reads 
\vspace{-1.5mm}
\begin{equation} 
	\vspace{2mm}
\label{instr}
\mathcal{S}(\mathbb{C}^{d})\ni \rho \longmapsto \frac{\sqrt{\smash[b]{\Pi_{i}}}\rho \sqrt{\smash[b]{\Pi_{i}}}}{\tr(\Pi_i\rho)}\in\mathcal{S}(\mathbb{C}^{d}),
\end{equation}
provided that the measurement yielded the result $i \in I_k$ 
\cite[p. 404]{DecGra07}, see also  \cite{Barnum2, Barnum}.

\smallskip

Fix a POVM $\Pi=\{\Pi_1, \ldots, \Pi_k\}$ and $U\! \in \mathcal{U}(\C^d)$, where $\,\mathcal{U}(\C^d)$ stands for the set of unitary operators on $\,\C^d$. In what follows we define the  PIFS corresponding to a quantum system that evolves in accordance to $U$ and is repeatedly measured with  $\Pi$. 
 Recall that the deterministic time evolution of a quantum system  is said to be governed  by   $U$ if it is given by the  \emph{unitary channel} acting as
 \vspace{-1mm}
\begin{equation}
\label{channel}
\mathcal{S}(\mathbb{C}^{d})\ni \rho\longmapsto U\rho\,U^{\ast
}\in\mathcal{S}(\mathbb{C}^{d}).
\end{equation}
Taking into account the Born rule,  for an input state  $\rho \in \mathcal{S}(\C^d)$  we define  the probability of obtaining the   outcome $i \in I_k$ as
\vspace{-1mm}
\begin{equation*}
\label{pifsp}
\mathsf{p}_i(\rho):= \tr(\Pi_iU\rho\, U^*).
\end{equation*} The evolution map $\mathsf{F}_i$ is defined as the composition of the unitary channel \eqref{channel} with the state transformation due to $\Pi$ described in \eqref{instr}, i.e.,  
\vspace{1.5mm}
\begin{equation*}
\vspace{0.75mm}	
\label{pifsF}\mathsf{F}_i(\rho):= \frac{\sqrt{\smash[b]{\Pi_{i}}}\,U\rho\, U^*\sqrt{\smash[b]{\Pi_{i}}}}{\tr(\Pi_iU\rho\, U^*)},
\end{equation*}
provided that $\mathsf{p}_i(\rho)>0$. Clearly,  
$(\mathcal{S}(\mathbb{C}^{d}), \mathsf{F}_i, \mathsf{p}_i)_{i \in I_k}$ is a PIFS. 

Next, for $\rho \in \scd$ we put
$\Lambda_i(\rho) := \sqrt{\smash[b]{\Pi_{i}}}U\rho \,  U^*\sqrt{\smash[b]{\Pi_{i}}}$
and observe that 
$\mathsf{p}_i(\rho)\!=\!\tr(\Lambda_{i}(\rho))$  and $ \mathsf{F}_i(\rho)\!=\!\Lambda_{i}(\rho)/\tr(\Lambda_{i}(\rho))$,  provided that  $\tr(\Lambda_{i}(\rho))\!>\!0$.   It follows that  for any initial state $\rho\hspace{-0.25mm} \in \hspace{-0.25mm}\scd$   the probability of the system generating the  string of measurement outcomes $(i_1, \ldots, i_n) \in \hspace{-0.25mm} I_k^n$, where $n \in \N$, is given by 
$\tr(\Lambda_{i_n}   \cdots  \Lambda_{i_1}(\rho)) 
$, i.e.,  
 $$\mathsf{p}_{(i_1, \ldots, i_n)}(\rho)= \tr\hspace{-0.675mm}\left(\hspace{-0.25mm}\sqrt{\smash[b]{\Pi_{i_n}}}U   \cdots 	\sqrt{\smash[b]{\Pi_{i_1}}}	U \rho \, U^* \sqrt{\smash[b]{\Pi_{i_1}}}   \cdots  U^*	\sqrt{\smash[b]{\Pi_{i_n}}}\,	 \right).$$
 
    In a more general setting, probability measures on the shift space that are defined on cylinder sets with the  help of products of matrices corresponding to respective symbols were first investigated by Kusuoka in \cite{Kusuoka1}. Under the name of \emph{Kusuoka measures} they have been mostly explored in the context of fractal geometry, see, e.g., \cite{
  	Strichartz2,  Bessi, Kusuoka2,  Kigami,	Kusuoka1,  liu2017sobolev,  Strichartz1}.  We~stick to the definition of a Kusuoka measure given by Johansson et al. \cite{Kusuoka2}:

 \begin{definition}\label{defKusu}
 	Let $\,\{A_1, \ldots, A_k\} \subset \mathcal{L}(\C^d)$ be such that $\ \sum_{i=1}^k A_iA_i^*=\mathbb{I}\ $ and 
 	$\ \sum_{i=1}^k A_i^*\rho A_i=\rho\ $ for some positive-definite     operator $\hspace{0.5mm}\rho \in\hspace{-0.25mm} \mathcal{L}(\C^d)$ with $\,\tr \rho =1$.  
 	A probability measure $\,\mathbb{P}_\rho$ on  $\hspace{0.5mm}I_k^{\mathbb{N}}$ with the $\,\sigma$-algebra generated by the family of all cylinder sets is called a  \emph{Kusuoka measure   associated to  $\{A_1, \ldots, A_k\}$} if  
 	\begin{align} \mathbb{P}_\rho(C_{(i_1, \ldots, i_n)})    
 	= \tr(A_{i_n}^*  \cdots   A_{i_1}^* \rho \, A_{i_1}  \cdots   A_{i_n}) 
 	\end{align}  
 	where   
 	   $C_{(i_1, \ldots, i_n)}$ stands for the   cylinder set corresponding to the string $(i_1, \ldots, i_n)\! \in\! I_k^n$,   i.e., \linebreak $C_{(i_1, \ldots, i_n)} :=\{(s_i)_{i=1}^\infty \in I_k^\N \colon s_1=i_1,  \ldots, s_n=i_n\}$, $n \in \mathbb{N}$.
 \end{definition}
\noindent 
Observe that the conditions imposed on the operators $A_1, \ldots, A_k$ assure that $\mathbb{P}_\rho$ is consistent (well-defined) and shift-invariant, i.e.,  for every  $\iota \in I_k^n$, where $n \in \N$, we have
 \begin{equation}\label{prop} \mathbb{P}_\rho(C_\iota)=\sum_{j \in I_k} \mathbb{P}_\rho(C_{\iota j})  \ \ \textrm{ and } \  \ \mathbb{P}_\rho(C_\iota) = \sum_{j \in I_k} \mathbb{P}_\rho(C_{j\iota}).\end{equation}

Let us get back to the  quantum system modelled by the PIFS $(\mathcal{S}(\mathbb{C}^{d}), \mathsf{F}_i, \mathsf{p}_i)_{i \in I_k}$. From now on, we fix   the maximally mixed state $\rho_*:=\mathbb{I}/d\,$ as the  initial state of this system.  
For the cylinder set  $C_\iota$  corresponding to   $\iota=(i_1, \ldots, i_n) \in I_k^n$, $n\! \in\! \N$, we put $\mathbb{P}_*(C_\iota)$ for the probability of the system generating $\hspace{0.5mm}\iota\hspace{0.5mm}$ as the string of measurement outcomes, i.e., 
\begin{equation} \label{ptrace}
\mathbb{P}_*(C_\iota)     :=   \:
\mathsf{p}_{\iota}(\rho_*)    
=  \tr\hspace{-0.675mm}\left(\hspace{-0.25mm}\sqrt{\smash[b]{\Pi_{i_n}}}U   \cdots 	\sqrt{\smash[b]{\Pi_{i_1}}}	U \rho_* \, U^* \sqrt{\smash[b]{\Pi_{i_1}}}   \cdots  U^*	\sqrt{\smash[b]{\Pi_{i_n}}}\,	 \right).\end{equation} 
Note that, denoting the   Hilbert-Schmidt norm by $||\cdot ||_{\textsf{HS}}$, we can rewrite \eqref{ptrace}  as 
\begin{equation*} 
	\mathbb{P}_*(C_\iota)      	=  
	\tfrac 1d \left\| U^*\sqrt{\smash[b]{\Pi_{i_1}}}   \cdots U^*	\sqrt{\smash[b]{\Pi_{i_n}}}		\, \right\|_{\textsf{HS}}^2\!.\end{equation*} 
By the Kolmogorov extension theorem, $\hspace{0.5mm}\mathbb{P}_*\hspace{0.5mm}$ extends to a measure on the space of sequences of measurement outcomes $\hspace{0.5mm}I_k^\N\hspace{0.5mm}$ with  the $\sigma$-algebra generated by all cylinder sets. It follows easily that $\mathbb{P}_*$ is a Kusuoka measure associated to  $\{U^*\sqrt{\smash[b]{\Pi_{1}}}, \ldots, U^*\sqrt{\smash[b]{\Pi_{k}}}\}$ since the normalization condition \eqref{sumid} gives $$
\sum_{j \in I_k} (U^*\sqrt{\smash[b]{\Pi_{j}}}\,)(\sqrt{\smash[b]{\Pi_{j}}}U)=\mathbb{I}  \ \ \textrm{ and } \  \ \sum_{j \in I_k} (\sqrt{\smash[b]{\Pi_{j}}}U)\rho_*(U^*\sqrt{\smash[b]{\Pi_{j}}}\,)=\rho_*\, .$$

 \begin{ex}\label{rank1chain}

\smallskip
 
To illustrate the notions introduced so far, let us discuss in detail the   case of $\Pi$ being a rank-$1$ POVM. 
Recall that for each $\,i \in I_k$ we have $\Pi_i = \tfrac dk \rho_i$, where $\rho_i$ is an orthogonal  projection on the subspace spanned by some unit vector $\,\varphi_{i} \in \mathbb{C}^d$. This implies that the evolution maps are all constant since for every $i \in I_k$ we have $\mathsf{F}_i (\rho ) = \rho_i$ for every $\rho \in \scd$ such that  $\mathsf{p}_i(\rho)> 0$. 
That is, to each measurement outcome there corresponds a single post-measurement state,  so from an outcome we can recover the underlying quantum state. 
In consequence, the Kusuoka measure $\mathbb{P}_*$ is a Markov measure, as we now show. 
	
	Firstly, we establish the Markov chain that arises on the space of quantum states $\scd$. In the first measurement each outcome is equally likely:  
	$$\mathsf{p}_{i}(\rho_*) =  \tr(\Pi_i U \rho_*U^*) =\tfrac 1d \tr(\Pi_i) = \tfrac 1k$$ for every $i \in I_k$.
	Hence, the state space of this Markov chain is $\{\rho_1, \ldots, \rho_k\}$, its initial distribution is uniform and the transition matrix reads $[\mathsf{p}_j(\rho_{i})]_{i, j \in I_k}$. 	
Since 
\begin{equation}
	\label{rank1prob}\mathsf{p}_j(\rho_{i})=\tr(\Pi_j U \rho_i U^*) = \tfrac dk \left|  \braket{ \varphi _{j}, U\varphi_{i}} \right|  ^{2}
\end{equation}
 for $\hspace{0.5mm}i, j \hspace{-0.25mm}\in\hspace{-0.25mm} I_k$, we see that the transition matrix is bistochastic, and so the uniform distribution is stationary.

	The dynamics induced by this system on the space of measurement outcomes $\hspace{0.25mm}I_k$ turns out to be Markovian as well. Actually, it mirrors the Markov chain generated on $\hspace{0.25mm}\scd$, i.e., the sequence of quantum states occupied by the system at consecutive time steps can be reconstructed from the sequence of measurement  outcomes.  To see this, let \mbox{$(i_1, \ldots, i_n) \in\hspace{-0.25mm} I_k^n$}, $n \in \N$. We show that 
	 \vspace{-1mm}
\begin{equation}
	\label{markovmeasure}
  	\mathbb{P}_*(C_{(i_1, \ldots, i_n)})= \mathsf{p}_{(i_1, \ldots, i_n)}(\rho_*)=\mathsf{p}_{i_1}(\rho_*)\mathsf{p}_{i_2}(\rho_{i_1})\mathsf{p}_{i_3}(\rho_{i_2})\cdot\ldots\cdot \mathsf{p}_{i_n}(\rho_{i_{n-1}}).
\end{equation} 		
Indeed, if $\mathsf{p}_{(i_1, \ldots, i_n)}(\rho_*)\hspace{-0.25mm} >\hspace{-0.25mm} 0$, then \eqref{genprob} implies that $\mathsf{p}_{(i_1, \ldots, i_r)}(\rho_*)\hspace{-0.25mm} >\hspace{-0.25mm} 0$ 
for 
each \mbox{$r\hspace{-0.25mm} \in\hspace{-0.25mm} \{1, \ldots, n-1\}$}, so
$\mathsf{F}_{(i_1, \ldots, i_r)}(\rho_*)=\rho_{i_r}$. Thus,  $\mathsf{p}_{i_{r+1}}(\mathsf{F}_{(i_1, \ldots, i_r)}(\rho_*))=\mathsf{p}_{i_{r+1}}(\rho_{i_r})$, and \eqref{markovmeasure} follows from \eqref{genprob}.
If $\mathsf{p}_{(i_1, \ldots, i_{n})}(\rho_*)=0$, then 
there exists $r\hspace{-0.5mm} \in\hspace{-0.5mm} \{1, \ldots, n-1\}$ such that 
$\mathsf{p}_{(i_1, \ldots, i_{r})}(\rho_*)\hspace{-0.25mm} >\hspace{-0.25mm} 0$ and $\mathsf{p}_{(i_1, \ldots, i_{r+1})}(\rho_*)=0$. Using \eqref{genprob} again, we obtain   $\mathsf{p}_{i_{r+1}}(\rho_{i_r}) = 0$, and so  \eqref{markovmeasure} holds in this case as well.

Hence, the measurement outcomes form a Markov chain on $I_k$ with  uniform initial distribution and   with  transition matrix  $\mathsf{Q}=[\mathsf{Q}_{ij}]_{i,j\in I_k}$ such that  $\mathsf{Q}_{ij}=\mathsf{p}_{j}(\rho_{i})$ for $\hspace{0.5mm}i,j \in\hspace{-0.5mm} I_k$, and the Kusuoka measure $\mathbb{P}_*$ is a Markov measure, as claimed.
 \end{ex}

\begin{remark*} 
Mimicking the arguments from Example \ref{rank1chain}, one can easily see that if the operators $A_1, \ldots, A_k\,$ that generate  a Kusuoka measure are all rank-$1$, then this Kusuoka measure is a Markov measure. Note that every rank-$1$ operator can be written as the composition of a (scaled) rank-$1$ projection with a unitary operator, as in the case of rank-$1$ POVMs.  
\end{remark*}

Next, we discuss the ergodicity of Kusuoka measures. 
 
  \begin{definition}
 	We say that $\,\{A_1, \ldots, A_k\} \subset  \mathcal{L}(\C^d)$  is  irreducible if  there does not exist a non-trivial subspace of $\:\C^d$ invariant under $A_i\,$ for every $\,i \in  \{1, \ldots, k\}$.\end{definition}  
 \noindent
Kusuoka showed that the irreducibility of a family of operators guarantees the existence and uniqueness of    $\hspace{0.33mm}\rho\hspace{0.33mm}$ from Definition \ref{defKusu}, thus also the existence and uniqueness of the probability measure associated with these operators  \cite[Thm. 1.2]{Kusuoka1}, see also \cite[Prop. 15]{morris2018ergodic}.  
 Moreover, Kusuoka proved that irreducibility constitutes a sufficient condition for the ergodicity of this measure (\cite[Thm. 2.12]{Kusuoka1}, see also \cite[eq. (5)]{Kusuoka2}): 
 \begin{theorem}\label{kusuoka_suff}
 	If $\,\{A_1, \ldots, A_k\}$ is irreducible, then the associated Kusuoka measure is ergodic. \end{theorem}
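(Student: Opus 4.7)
The plan is to reduce ergodicity of the shift to a noncommutative Perron--Frobenius analysis of the completely positive transfer operator $\Phi\colon \mathcal{L}(\C^d)\to\mathcal{L}(\C^d)$ given by $\Phi(X):=\sum_{i=1}^{k} A_i^{*}XA_i$, whose Hilbert--Schmidt adjoint is $\Phi^{\dagger}(Y):=\sum_{i=1}^{k} A_iYA_i^{*}$. The two hypotheses of Definition~\ref{defKusu} read exactly $\Phi(\rho)=\rho$ and $\Phi^{\dagger}(\mathbb{I})=\mathbb{I}$. Unitality makes $\Phi^{\dagger}$ a contraction in the operator norm, so $\{\Phi^n\}_{n\in\N}$ is uniformly bounded and every peripheral eigenvalue of $\Phi$ is automatically semi-simple.

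The heart of the argument is to show that irreducibility forces the fixed space $\{Y\in\mathcal{L}(\C^d):\Phi(Y)=Y\}$ to equal the one-dimensional line $\C\rho$. The central lemma is: if $Y\ge 0$ and $\Phi(Y)=Y$, then $\ker Y$ is invariant under every $A_i$. Indeed, for $\psi\in\ker Y$ one has $0=\langle\psi,Y\psi\rangle=\sum_i\langle A_i\psi,YA_i\psi\rangle$, forcing $YA_i\psi=0$ by positivity of $Y$. Given an arbitrary Hermitian fixed point $Y$, let $\lambda$ be the largest eigenvalue of $\rho^{-1/2}Y\rho^{-1/2}$; then $\lambda\rho-Y\ge 0$ is a $\Phi$-fixed positive operator with a nontrivial kernel, so the lemma together with irreducibility forces that kernel to be all of $\C^d$, giving $Y=\lambda\rho$. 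Decomposing an arbitrary fixed point into its Hermitian and anti-Hermitian parts then yields $\{Y:\Phi(Y)=Y\}=\C\rho$.

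Combining this with the boundedness and semi-simplicity above, the standard Jordan-form argument in finite dimensions gives, for every $X\in\mathcal{L}(\C^d)$,
\begin{equation*}
\frac{1}{N}\sum_{\ell=0}^{N-1}\Phi^{\ell}(X)\ \longrightarrow\ (\tr X)\,\rho,
\end{equation*}
where the scalar is identified by taking traces on both sides and using $\tr\Phi(X)=\tr X$ (dual to $\Phi^{\dagger}(\mathbb{I})=\mathbb{I}$). Denoting by $T$ the shift on $I_k^{\N}$ and writing $A_\iota:=A_{i_1}\cdots A_{i_n}$ for $\iota=(i_1,\ldots,i_n)\in I_k^n$, a short computation using cyclicity of the trace shows that for any cylinders $C_\iota, C_\tau$ and every $\ell\ge 0$,
\begin{equation*}
\mathbb{P}_\rho\bigl(C_\iota\cap T^{-|\iota|-\ell}C_\tau\bigr)=\tr\bigl(A_\tau^{*}\,\Phi^{\ell}(A_\iota^{*}\rho A_\iota)\,A_\tau\bigr).
\end{equation*}
Ces\`aro-averaging in $\ell$ and applying the preceding display therefore yields
\begin{equation*}
\frac{1}{N}\sum_{\ell=0}^{N-1}\mathbb{P}_\rho(C_\iota\cap T^{-|\iota|-\ell}C_\tau)\ \longrightarrow\ \mathbb{P}_\rho(C_\iota)\,\mathbb{P}_\rho(C_\tau),
\end{equation*}
which by density of cylinder functions in $L^2(\mathbb{P}_\rho)$ extends to all measurable sets, and by the von Neumann mean ergodic theorem is equivalent to ergodicity of $T$ with respect to $\mathbb{P}_\rho$.

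I expect the main obstacle to be the noncommutative Perron--Frobenius step: irreducibility is a condition on the individual operators $A_i$, whereas the invariance structure one actually wants to exploit lives at the level of the positive map $\Phi$, and the kernel-invariance lemma is precisely what bridges the two. The remaining ingredients (trace preservation, finite-dimensional spectral theory, and the cylinder computation) are routine once that link is in place.
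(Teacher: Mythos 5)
The paper offers no proof of this statement: Theorem~\ref{kusuoka_suff} is imported from Kusuoka \cite[Thm.~2.12]{Kusuoka1} (see also \cite{Kusuoka2}), so there is no in-text argument to compare yours against line by line. Judged on its own, your proof is correct and complete. The Perron--Frobenius step is sound: unitality of $\Phi^{\dagger}$ gives power-boundedness and hence semi-simplicity of peripheral eigenvalues; the kernel-invariance lemma is exactly the right bridge from positivity of the fixed point to irreducibility of the family $\{A_i\}$; and the largest-eigenvalue trick with $\rho^{-1/2}Y\rho^{-1/2}$ (legitimate because Definition~\ref{defKusu} assumes $\rho$ positive-definite, so $\rho^{-1/2}$ exists and $\ker(\lambda\rho-Y)\neq\{0\}$) pins the fixed space down to $\C\rho$. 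You correctly need only simplicity of the eigenvalue $1$, not triviality of the whole peripheral spectrum, since the Ces\`aro average annihilates the other unimodular eigenvalues --- this is precisely why the argument yields ergodicity rather than mixing. The cylinder identity is right (though cyclicity of the trace is not actually what is used there: only the definition of $\mathbb{P}_\rho$ and summation over the intermediate block of length $\ell$, which produces $\Phi^{\ell}$), the index shift by $|\iota|$ is harmless for the Ces\`aro limit, and the passage from convergence on cylinders to ergodicity is the standard semi-algebra criterion combined with the mean ergodic theorem. Your route --- reducing ergodicity to uniqueness of the fixed point of the transfer operator $\Phi$ --- is the standard one for such matrix-product measures and is in the spirit of the cited sources, so there is nothing to flag beyond the cosmetic points above.
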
  
  \noindent
   Actually, when it comes to irreducibility, it does not matter whether one considers the operators $A_i$'s or their adjoints. Namely, from the simple fact that a subspace $V\!\subset\textbf{} \C^d$ is invariant under $A\in\! \mathcal{L}(\C^d)$ if and only if  $\hspace{0.33mm}V^\bot$ is invariant under $A^*$, 
 we quickly deduce the following 
 \begin{obs}\label{adjoints}
 	$\{A_1,  \ldots, A_k\}$ is irreducible if and only if   $\{A_1^*, \ldots, A_k^*\}$
 	is irreducible. 
 \end{obs}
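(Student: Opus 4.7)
The plan is to establish the equivalence by contrapositive: I will show that if $\{A_1, \ldots, A_k\}$ admits a non-trivial common invariant subspace, then so does $\{A_1^*, \ldots, A_k^*\}$, and then the reverse implication will follow by symmetry. The single ingredient I intend to lean on is the quoted elementary fact that for any $A \in \mathcal{L}(\C^d)$ and any subspace $V \subset \C^d$, one has $AV \subseteq V$ if and only if $A^* V^\perp \subseteq V^\perp$.

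First I would fix a non-trivial subspace $V$ witnessing reducibility of $\{A_1, \ldots, A_k\}$, meaning $\{0\} \subsetneq V \subsetneq \C^d$ with $A_i V \subseteq V$ for every $i \in \{1, \ldots, k\}$. Applying the cited duality to each $A_i$ separately yields $A_i^* V^\perp \subseteq V^\perp$ for all $i$, so $V^\perp$ is a common invariant subspace for $\{A_1^*, \ldots, A_k^*\}$. Because $\C^d$ is finite-dimensional and decomposes orthogonally as $V \oplus V^\perp$, the proper inclusion $\{0\} \subsetneq V \subsetneq \C^d$ translates into $\{0\} \subsetneq V^\perp \subsetneq \C^d$; hence $V^\perp$ is non-trivial and $\{A_1^*, \ldots, A_k^*\}$ is reducible. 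For the converse, I would apply exactly the same argument to the family $\{A_1^*, \ldots, A_k^*\}$ and use the involution $(A_i^*)^* = A_i$ to return to $\{A_1, \ldots, A_k\}$.

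There is no substantive obstacle: the statement is a direct manifestation of the adjoint--orthogonality duality, and the finite dimension of $\C^d$ automatically transports non-triviality from $V$ to $V^\perp$. The only point that deserves a moment of care is stating non-triviality symmetrically (both $V \neq \{0\}$ and $V \neq \C^d$) so that both endpoints of the chain are preserved under orthogonal complementation; after that the proof is essentially a one-line invocation of the duality.
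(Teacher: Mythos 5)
Your argument is correct and is exactly the one the paper uses: the observation is deduced from the duality that $V$ is invariant under $A$ if and only if $V^\perp$ is invariant under $A^*$, applied to each $A_i$ and combined with the fact that orthogonal complementation preserves non-triviality. Nothing further is needed.
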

\noindent
In our context and with Observation \ref{adjoints} taken into account,   Theorem~\ref{kusuoka_suff} can be restated as
 	\begin{customthm}{5'}
 	\label{ergodremark}
 		If  
 	$\, \{\sqrt{\smash[b]{\Pi_{1}}}U, \ldots, \sqrt{\smash[b]{\Pi_{k}}}U\}$ is irreducible, then $\ \mathbb{P}_*$ is   ergodic.
    	\end{customthm}
    
The main aim of this paper is to show that Theorem~\ref{ergodremark} can be reversed, i.e., that the irreducibility of $\{\sqrt{\smash[b]{\Pi_{1}}}U, \ldots, \sqrt{\smash[b]{\Pi_{k}}}U\}$ is  a  necessary condition for the ergodicity of $\hspace{0.5mm}\mathbb{P}_*$, in the case of $\hspace{0.5mm}\Pi$ being a~rank-$1$ POVM (Theorem~\ref{rank1coroll}) or a PVM consisting of exactly two projections, the ranks of which are equal to $1$ and $\hspace{0.35mm}d-1$, respectively (Theorem~\ref{pvmergodic}). 
    Since for rank-$1$ POVMs $\hspace{0.5mm}\mathbb{P}_*$  is a Markov measure, the characterization of ergodicity via Kusuoka's condition provides an alternative to the well-known characterization in terms of the irreducibility of the transition matrix of the corresponding Markov chain. 
  	
  	A key step in reversing Theorem~\ref{ergodremark} is the simplification of the irreducibility condition in the case of POVMs consisting of scaled orthogonal projections  (Theorem~\ref{projinvariant}). For \mbox{rank-$1$} POVMs this condition has a particularly straightforward geometric description  (Proposition~\ref{suffnonerg}). 
As a result, we can easily characterize when the Kusuoka measure induced by a unitarily evolving qubit  (two-dimensional quantum system) undergoing repeated measurements described by a rank-$1$ POVM is ergodic (Corollary~\ref{qubitkusuoka}).
  	
 Additionally, for the PVMs consisting of two projections with respective ranks $d-\hspace{-0.15mm}1$ and~$1$ we prove that the Kusuoka measure is \emph{reversible} in the sense that the probability of the system emitting a given string of measurement outcomes is equal to the probability that the reverse string will be produced (Theorem \ref{reversible}), i.e.,   
	$$ 
	\mathbb{P}_*(C_{(i_1, \ldots, i_n)}) = \mathbb{P}_*(C_{(i_n, \ldots, i_1)})$$
 for every $\,(i_1, \ldots, i_n) \in I_k^n$, $n \in \N$.
 
 \section{Results}

 Firstly, we show that Kusuoka's sufficient ergodicity condition, i.e., the irreducibility of
  $\{\sqrt{\smash[b]{\Pi_{1}}}U, \ldots, \sqrt{\smash[b]{\Pi_{k}}}U\}$, can be significantly simplified if the POVM $\Pi$  consists of scaled  projections. Namely, instead of verifying the invariance of a subspace of $\,\C^d$ under the composed operators $\sqrt{\smash[b]{\Pi_{1}}}U, \ldots, \sqrt{\smash[b]{\Pi_{k}}}U$, it suffices to   verify its invariance under $U$ and under the measurement operators $\Pi_{1}, \ldots, \Pi_{k}$.

\begin{theorem}\label{projinvariant}
	Let $\,U\hspace{-0.5mm} \in\hspace{-0.25mm}  \mathcal{U}(\C^d)$ and let $\,\Pi=\{\Pi_1, \ldots, \Pi_k\}$ be a POVM  such that  $\,\Pi_{i}={c_i P_{i}}\,$ for every $i \in I_k$, where   $c_i>0\hspace{0.25mm}$ and $\hspace{0.25mm}P_i \in \mathcal{L}(\C^d)$ is an orthogonal projection (i.e., $P_i=P_i^2=P_i^*$).
	Let $\,W$ be a~non-trivial subspace of $\:\C^d$. Then for every $\,i \in I_k$ we have  
$$\sqrt{\smash[b]{\Pi_{i}}}U(W) \subset W \ \Longleftrightarrow \ \ U(W)=W \textrm{ and }\ \  {\Pi_{i}}(W) \subset W.$$
\end{theorem}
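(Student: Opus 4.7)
My plan is to first simplify the square-root: since each $P_i$ is an orthogonal projection and $\Pi_i = c_i P_i$ with $c_i > 0$, we have $\sqrt{\smash[b]{\Pi_i}} = \sqrt{c_i}\,P_i$. Hence the condition $\sqrt{\smash[b]{\Pi_i}}\,U(W) \subset W$ is equivalent to $P_i U(W) \subset W$, and the conclusion $\Pi_i(W) \subset W$ is equivalent to $P_i(W) \subset W$. So the whole statement reduces to an equivalence about the projections $P_i$ and the unitary $U$, with the POVM normalization $\sum_{i} c_i P_i = \mathbb{I}$ inherited from \eqref{sumid}.

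Interpreting the theorem in its natural global form, the $(\Leftarrow)$ direction is immediate: if $U(W) = W$ and $P_i(W) \subset W$, then $\sqrt{\smash[b]{\Pi_i}}\,U(W) = \sqrt{c_i}\,P_i(W) \subset W$. For the $(\Rightarrow)$ direction, assume $P_i U(W) \subset W$ for every $i \in I_k$. The key step is to feed this into the resolution of identity $\mathbb{I} = \sum_i c_i P_i$: for any $w \in W$,
$$ Uw = \sum_{i\in I_k} c_i P_i Uw, $$
and every summand lies in $W$ by assumption, so $Uw \in W$. Thus $U(W) \subset W$, and since $U$ is unitary and $W$ is finite-dimensional, we obtain $U(W) = W$. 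Finally, applying $P_i$ to this equality gives $P_i(W) = P_i U(W) \subset W$, hence $\Pi_i(W) = c_i P_i(W) \subset W$.

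Strictly speaking, there is no serious obstacle: the only nontrivial idea is the use of $\sum_i c_i P_i = \mathbb{I}$ to reconstruct $U$ from its compositions $P_i U$. The rest is bookkeeping. One small point to record is that the argument deducing $U(W) \subset W$ needs the hypothesis for \emph{all} $i$ simultaneously (a single $i$ would not suffice, since then one only controls a projected piece of $Uw$), which is why the equivalence must be read with the universal quantifier over $i$ on both sides.
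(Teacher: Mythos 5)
Your proof is correct and follows essentially the same route as the paper's: reduce $\sqrt{\smash[b]{\Pi_i}}$ to a scalar multiple of $\Pi_i$ (resp.\ $P_i$), use the resolution of identity $\sum_i \Pi_i = \mathbb{I}$ to get $U(W)\subset W$ from the hypotheses for all $i$, upgrade to $U(W)=W$ by unitarity, and then deduce $\Pi_i(W)\subset W$. Your remark that the forward direction needs the hypothesis for all $i$ simultaneously matches how the paper reads and proves the statement.
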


\begin{proof}  
	Note that ${\Pi^2_{i}}={c_i\Pi_{i}}$, and so
	$\sqrt{\smash[b]{\Pi_{i}}}={\Pi_{i}}/{\sqrt{\smash[b]{c_{i}}}}$, which guarantees that the images of any linear subspace of $\,\C^d$ under  $\sqrt{\smash[b]{\Pi_{i}}}$ and ${\Pi_{i}}$ coincide.  
	\begin{enumerate}[leftmargin=9.0mm]
		\itemsep 1mm
		\item	
		[($\Rightarrow$)]
We have ${\Pi_{i}}U(W) \subset W$  for every $i \in  I_k$ since  ${\Pi_{i}}U(W)  =  \sqrt{\smash[b]{\Pi_{i}}}U(W) \subset W$. Let $w \in W$. From   \eqref{sumid} we obtain  $Uw  =  \sum_{i=1}^{k}\Pi_iUw$. It follows that $Uw \in W$, because ${\Pi_{i}}Uw \in W$  for every $i \in  I_k$ and $W$ is a   subspace of $\,\C^d$. Hence,  $U(W) \subset W$, thus also $U(W) = W$  as $U$ is an isometry. Therefore, $ {\Pi_{i}}(W) = {\Pi_{i}}U(W) \subset W$ for every $i \in  I_k$, as desired.

		\item 	[($\Leftarrow$)]
		It suffices to observe  that
		$\sqrt{\smash[b]{\Pi_{i}}}U(W)=\sqrt{\smash[b]{\Pi_{i}}}(W)={\Pi_{i}}(W)  \subset W$, where $i\in I_k$. 
		\qedhere
	\end{enumerate}
\end{proof}

 In the case of rank-$1$ POVMs, which consist of uniformly scaled one-dimensional   projections, Kusuoka's sufficient ergodicity condition can be simplified further. Namely, the invariance of a subspace under   $\Pi_{1}, \ldots, \Pi_{k}$ can be expressed in terms of the vectors associated with $\Pi$ belonging to this subspace or to its orthogonal complement. 
In the two following propositions we let $\Pi=\{\Pi_1, \ldots, \Pi_k\}$ be a rank-$1$ POVM and denote the associated unit vectors by $\varphi_{1}, \ldots, \varphi_k$, i.e., for $i \in I_k$ we have  $\Pi_i=\frac{d}{k}\rho_{i}$, where $\rho_{i}$ is an orthogonal projection on $\spann\{\varphi_i\}$.

\begin{proposition} 
	\label{suffnonerg} 
 Let  $\,W\hspace{-0.25mm} $  be a non-trivial   subspace of $\hspace{0.75mm}\C^d$. 	 Then for every $\,i \in  I_k$ we have 
$${\Pi_{i}}(W) \subset W \ \Longleftrightarrow \ \varphi_i \in W \cup W^\bot.$$
\end{proposition}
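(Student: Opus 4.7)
The plan is to reduce the statement about $\Pi_i$ to one about the rank-$1$ projection $\rho_i$, and then exploit the explicit formula for a rank-$1$ projection to classify when $\rho_i(W)\subset W$.

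First I would note that since $\Pi_i = \tfrac{d}{k}\rho_i$, the image $\Pi_i(W)$ coincides with $\rho_i(W)$ as a subspace of $\C^d$, so the condition $\Pi_i(W)\subset W$ is equivalent to $\rho_i(W)\subset W$. Next I would use the fact that $\rho_i$ is the orthogonal projection on $\spann\{\varphi_i\}$, so for every $v\in\C^d$ we have $\rho_i(v)=\braket{\varphi_i,v}\varphi_i$. Consequently, $\rho_i(W)$ takes only two possible forms: either $\rho_i(W)=\{0\}$, which happens precisely when $\braket{\varphi_i,v}=0$ for all $v\in W$, i.e.\ when $\varphi_i\in W^\bot$; or else $\rho_i(W)=\spann\{\varphi_i\}$, which happens when there exists $v\in W$ with $\braket{\varphi_i,v}\neq 0$.

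For the implication ($\Leftarrow$), I would split into the two cases: if $\varphi_i\in W^\bot$, then $\rho_i(W)=\{0\}\subset W$; if $\varphi_i\in W$, then $\rho_i(W)\subset\spann\{\varphi_i\}\subset W$. For the implication ($\Rightarrow$), assume $\Pi_i(W)\subset W$ and suppose, for contradiction, that $\varphi_i\notin W^\bot$, so that there exists $v\in W$ with $\braket{\varphi_i,v}\neq 0$. Then $\rho_i(v)=\braket{\varphi_i,v}\varphi_i \in \rho_i(W)\subset W$, and dividing by the nonzero scalar $\braket{\varphi_i,v}$ yields $\varphi_i\in W$, so in either case $\varphi_i\in W\cup W^\bot$.

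This argument is essentially a direct unpacking of the formula for a rank-$1$ orthogonal projection, so there is no real obstacle; the only thing to keep in mind is that $W$ is assumed non-trivial but this plays no role here, as the dichotomy $\rho_i(W)\in\{\{0\},\spann\{\varphi_i\}\}$ holds for any subspace. The argument trivially extends to any scaled rank-$1$ projection, not only those with the uniform scaling $d/k$ specific to rank-$1$ POVMs.
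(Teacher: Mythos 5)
Your proof is correct and follows essentially the same route as the paper's: both reduce $\Pi_i(W)\subset W$ to the rank-$1$ projection formula $\rho_i(w)=\braket{\varphi_i,w}\varphi_i$ and observe the dichotomy $\Pi_i(W)\in\{\{0\},\spann\{\varphi_i\}\}$ according to whether $\varphi_i\in W^\bot$. The only cosmetic difference is that you phrase the forward direction as a contradiction, whereas the paper argues it directly.
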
 
\begin{proof} Fix   $\,i \in I_k\,$ and note that 
	$$\ \Pi_i(W)=\{\braket{\varphi_i,w}\varphi_i \colon w \in W\}=	
	\left\lbrace\begin{array}{ll} \{0\}	  & \textrm{ if }\ \varphi_i \in W^\bot,\\\spann\{\varphi_i\} & \textrm{ if }\  \varphi_i \notin  W^\bot.	\end{array}\right.
	$$ 
	\vspace{-2mm}
	\begin{enumerate}
	[leftmargin=9.0mm]
		\itemsep 1mm	 
		\item[($\Rightarrow$)]  
		If $\varphi_i  \notin W^\bot$, then, by assumption, we have  $\,\spann\{\varphi_i\}  =\Pi_i(W) \subset W$, which in turn implies that $\varphi_i \in W$. We conclude that $\varphi_i \in W \cup W^\bot$, as required.
				
		\item[($\Leftarrow$)]
		If $\varphi_i \in W^\bot$, then $\Pi_i(W)=\{0\}$, so  $\Pi_i(W) \subset W$. If $\varphi_i \in W$, then $\Pi_i(W)=\spann\{\varphi_i\}$; hence, we again obtain  $ \Pi_i(W) \subset W$, which concludes the proof. 
	\qedhere
	\end{enumerate}
\end{proof}

Recall from Example \ref{rank1chain} that if $\Pi$ is a rank-$1$ POVM, then  $\mathbb{P}_*$ is a Markov measure. 
It is well known that the ergodicity of a Markov measure is equivalent to  the irreducibility of the corresponding transition matrix \cite[Thm.   6.2.6]{kitchens1997symbolic}. Hence, if $\hspace{0.25mm}\Pi$ is a rank-$1$ POVM, then $\mathbb{P}_*$ is  ergodic if and only if $[\mathsf{p}_j(\rho_i)]_{i,j\in I_k}$ is irreducible.
In what follows we show that in the case of rank-$1$ POVMs Kusuoka's sufficient ergodicity condition follows from the irreducibility of the transition matrix, and so from the non-ergodicity of $\hspace{0.25mm}\mathbb{P}_*$.

\begin{proposition}\label{rank1ergodic}
  
 Let $\hspace{0.25mm}U\! \!\in \!  \mathcal{U}(\C^d)$. If there exists a non-trivial   subspace $\hspace{0.25mm}W\hspace{-0.5mm}$   of $\hspace{1mm}\C^d$    such that   $\hspace{0.15mm}U(W) = W$ and $\hspace{0.5mm}\varphi_i \hspace{-0.25mm}\in\hspace{-0.25mm} W \hspace{-0.15mm} \cup W^\bot$ for every $\,i\hspace{-0.25mm}\in\hspace{-0.25mm} I_k$, then   $\hspace{0.25mm}[\mathsf{p}_{j}(\rho_{i})]_{i,j\in I_k}$ is  reducible.
\end{proposition}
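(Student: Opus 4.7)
The plan is to exhibit a proper partition of $I_k$ that witnesses the reducibility of the transition matrix $\mathsf{Q} := [\mathsf{p}_j(\rho_i)]_{i,j\in I_k}$. Motivated directly by the hypothesis, I would split the index set as
\[ I_W := \{i \in I_k : \varphi_i \in W\} \qquad \text{and} \qquad I_{W^\bot} := \{i \in I_k : \varphi_i \in W^\bot\}. \]
Since each $\varphi_i$ is a unit vector, $W \cap W^\bot = \{0\}$ forces this to be a disjoint union, and the assumption $\varphi_i \in W \cup W^\bot$ guarantees that it covers $I_k$.

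The first step is to verify that neither $I_W$ nor $I_{W^\bot}$ is empty. If all $\varphi_i$ belonged to $W$, then the POVM completeness $\sum_{i \in I_k}\Pi_i = \mathbb{I}$ from \eqref{sumid} would force the range of the identity to lie in $\spann\{\varphi_i : i \in I_k\} \subset W$, contradicting the non-triviality $W \neq \C^d$. The symmetric argument, applied to $W^\bot$, rules out $I_W = \emptyset$.

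The second step is to show that $\mathsf{p}_j(\rho_i) = 0$ whenever $i$ and $j$ lie in different blocks. By formula \eqref{rank1prob} this reduces to $\braket{\varphi_j, U\varphi_i} = 0$. The key ingredient here is that $U(W) = W$ together with the unitarity of $U$ yields $U(W^\bot) = W^\bot$; then in each of the two cross-block cases one of $U\varphi_i$, $\varphi_j$ lies in $W$ and the other in $W^\bot$, and orthogonality closes the argument.

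Once these two steps are in hand, re-indexing $I_k$ to list $I_W$ before $I_{W^\bot}$ puts $\mathsf{Q}$ into block-diagonal form with two non-empty blocks, which is exactly the reducibility of $\mathsf{Q}$. I do not anticipate a genuinely hard step; the one place that calls for some care is the non-emptiness of both blocks, where one must invoke the POVM axiom \eqref{sumid} rather than any property of the dynamics $U$.
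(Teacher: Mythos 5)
Your proposal is correct and follows essentially the same route as the paper's proof: the same partition of $I_k$ according to whether $\varphi_i \in W$ or $\varphi_i \in W^\bot$, the same non-emptiness argument via the spanning property forced by \eqref{sumid}, and the same vanishing of cross-block entries of $[\mathsf{p}_j(\rho_i)]_{i,j}$ obtained from \eqref{rank1prob} together with the $U$-invariance of both $W$ and $W^\bot$.
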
 
\begin{proof}  
		
		Put	$I_W:=\{i \in I_k \colon \varphi_{i} \in W\}$. 
		Note that $1 \leq  \# I_W \leq k-1$, because  \eqref{sumid} implies that   $\{\varphi_i\}_{i\in I_k}$ spans $\,\C^d$, and so neither $\hspace{0.25mm}\{\varphi_i\}_{i \in I_k} \subset W$  nor $\{\varphi_i\}_{i \in I_k} \subset W^\bot$ can hold.
		Recall from~\eqref{rank1prob} that 
		$\mathsf{p}_j(\rho_i) = \frac dk \left|\braket{\varphi_j,U\varphi_i}\right|^2\,$ for $\,i, j \in I_k$.	Let  $\hspace{0.25mm}r \in I_W$ and $s \in I_k \setminus I_{W}$. As  both 
		$W$ and $W^{\bot}$ are invariant under $\hspace{0.25mm}U$, we have $\hspace{0.25mm}\varphi_{r} \in W$ and $\hspace{0.25mm}\varphi_{s} \in W^{\bot}$, as well as  $\hspace{0.25mm}U\varphi_r \in W$ and $\hspace{0.25mm}U\varphi_s \in W^{\bot}$. Hence,  $\mathsf{p}_r(\rho_s)=\mathsf{p}_s(\rho_r)=0$, 
		 so it follows easily that  $\hspace{0.25mm}[\mathsf{p}_{j}(\rho_{i})]_{i,j\in I_k}$ is reducible.
\end{proof}

As a result, for rank-$1$ POVMs Kusuoka's sufficient ergodicity condition is also necessary, i.e., Theorem \ref{ergodremark} can be reversed.

\begin{theorem}\label{rank1coroll} Let  $\hspace{0.75mm}U\hspace{-0.75mm}\in  \mathcal{U}(\C^d)$ and let $\hspace{0.5mm}\Pi =\{\Pi_1, \ldots, \Pi_k\} \hspace{0.25mm}$ be a rank-$1$ POVM. The following conditions are equivalent:
	\vspace{-0.25mm}
	\begin{enumerate}[\rm (i)]
		\itemsep=0.5mm
		\item $\mathbb{P}_*$ is  ergodic,
		\item $\{\sqrt{\smash[b]{\Pi_{1}}}U, \ldots, \sqrt{\smash[b]{\Pi_{k}}}U\}$ is irreducible,
	
	\item there is no non-trivial   subspace $\hspace{0.25mm}W\hspace{-0.75mm}$   of $\hspace{1mm}\C^d$  such that $\hspace{0.25mm}U(W)\hspace{-0.25mm}=\hspace{-0.25mm}W$ and  $\hspace{0.5mm}\varphi_i \hspace{-0.25mm}\in\hspace{-0.25mm} W \hspace{-0.15mm} \cup W^\bot$ \linebreak for every $\,i\hspace{-0.25mm}\in\hspace{-0.25mm} I_k$, where
		$\hspace{0.25mm}\varphi_{i}$ is associated with $\Pi$ via 
		$\hspace{0.375mm}\operatorname{im}\hspace{-0.15mm}\Pi_i = \spann\{\varphi_i\}$, $i \in I_k$,

 		\item the transition matrix $\,[\mathsf{p}_{j}(\rho_{i})]_{i,j\in I_k}$ is irreducible.
			\end{enumerate}
\end{theorem}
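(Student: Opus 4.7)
My approach is to arrange the four conditions in a cycle of implications
$\mathrm{(iv)} \Rightarrow \mathrm{(iii)} \Rightarrow \mathrm{(ii)} \Rightarrow \mathrm{(i)} \Rightarrow \mathrm{(iv)}$,
with every arrow reducing to a result already established in the excerpt. The first arrow, $\mathrm{(iv)} \Rightarrow \mathrm{(iii)}$, is nothing but the contrapositive of Proposition~\ref{rank1ergodic}: if there were a non-trivial subspace $W$ satisfying $U(W)=W$ and $\varphi_i \in W \cup W^\bot$ for every $i \in I_k$, the transition matrix $[\mathsf{p}_j(\rho_i)]_{i,j\in I_k}$ would be reducible.

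For $\mathrm{(iii)} \Rightarrow \mathrm{(ii)}$ I would also argue by contraposition. Assume $\{\sqrt{\Pi_1}U,\ldots,\sqrt{\Pi_k}U\}$ is reducible, i.e., there is a non-trivial subspace $W \subset \C^d$ with $\sqrt{\Pi_i}U(W) \subset W$ for every $i \in I_k$. Since each $\Pi_i$ is a (positively) scaled orthogonal projection, Theorem~\ref{projinvariant} upgrades this to $U(W) = W$ together with $\Pi_i(W) \subset W$ for every $i$, and Proposition~\ref{suffnonerg} translates the latter into $\varphi_i \in W \cup W^\bot$ for every $i$, contradicting $\mathrm{(iii)}$. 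The implication $\mathrm{(ii)} \Rightarrow \mathrm{(i)}$ is exactly the restated Kusuoka sufficient condition, i.e., Theorem~\ref{ergodremark}. Finally, $\mathrm{(i)} \Rightarrow \mathrm{(iv)}$ follows by combining the observation from Example~\ref{rank1chain} that $\mathbb{P}_*$ is the Markov measure on $I_k^\N$ with transition matrix $[\mathsf{p}_j(\rho_i)]_{i,j\in I_k}$ with the classical characterization of ergodicity of a Markov measure as irreducibility of its transition matrix, already quoted in the text just before Proposition~\ref{rank1ergodic}.

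Since every arrow in the cycle is prepared by a preceding result, no substantive obstacle is expected; the only points requiring care are the consistent handling of non-triviality of $W$ (so that Theorem~\ref{projinvariant} and Proposition~\ref{suffnonerg} apply verbatim) and keeping track of the direction of the quantifiers when passing to contrapositives. It is worth noting that the cycle gives a bonus: it proves en route that for rank-$1$ POVMs the irreducibility of Kusuoka's operator family and the irreducibility of the Markov transition matrix are two equivalent restatements of the same geometric condition $\mathrm{(iii)}$, which is the conceptual content of the theorem.
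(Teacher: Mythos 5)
Your proposal is correct and follows essentially the same route as the paper: the paper also proves (iv) $\Rightarrow$ (iii) as the contrapositive of Proposition~\ref{rank1ergodic}, (iii) $\Rightarrow$ (ii) via Theorem~\ref{projinvariant} combined with Proposition~\ref{suffnonerg}, (ii) $\Rightarrow$ (i) via Theorem~\ref{ergodremark}, and closes the loop with the classical Markov-measure characterization from \cite[Thm. 6.2.6]{kitchens1997symbolic} (the paper states it as the full equivalence (i) $\Leftrightarrow$ (iv), while you use only the direction (i) $\Rightarrow$ (iv), which suffices). No gaps.
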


\begin{proof}As explained above, 
 \mbox{(i) $\Leftrightarrow$ (iv)} is a classical result \cite[Thm.   6.2.6]{kitchens1997symbolic}, \mbox{(iv)  $\Rightarrow$ (iii)} is the contraposition of Proposition~\ref{rank1ergodic}, \mbox{(iii)  $\Rightarrow$ (ii)} follows from Theorem~\ref{projinvariant} coupled with Proposition~\ref{suffnonerg}, and \mbox{(ii) $\Rightarrow$ (i)} is Theorem~\ref{ergodremark}.
\end{proof}

\noindent
In particular, for qubits (two-dimensional quantum systems) we obtain
	\begin{corollary}\label{qubitkusuoka}
	Let $\,U\! \in \mathcal{U}(\C^2)$ and let $\,\Pi$ be a rank-$1$ POVM. Then   $\,\mathbb{P}_*$    is  non-ergodic   if and only if  
$\:\Pi$ is the PVM corresponding to an eigenbasis of $\:U$.
\end{corollary}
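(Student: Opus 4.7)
The plan is to apply Theorem~\ref{rank1coroll} directly, exploiting that in $\C^2$ every non-trivial subspace is a line, so the $U$-invariant non-trivial subspaces are precisely the one-dimensional eigenspaces of $U$.

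First I would invoke the equivalence (i)$\Leftrightarrow$(iii) of Theorem~\ref{rank1coroll}: $\mathbb{P}_*$ is non-ergodic iff there exists a non-trivial $W\subset\C^2$ with $U(W)=W$ and $\varphi_i\in W\cup W^\bot$ for every $i\in I_k$. Any such $W$ can be written as $\spann\{w\}$ for a unit vector $w$; the invariance $U(W)=W$ is equivalent to $w$ being an eigenvector of $U$, and unitarity of $U$ automatically makes $W^\bot=\spann\{w^\bot\}$ an eigenline as well, so that $\{w,w^\bot\}$ is an eigenbasis of $U$.

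Next I would translate the membership condition: $\varphi_i\in W\cup W^\bot$ forces each $\varphi_i$ to be, up to a unimodular scalar, either $w$ or $w^\bot$, hence an eigenvector of $U$. Plugging this into the normalization $\sum_{i=1}^k\Pi_i = \frac{2}{k}\sum_{i=1}^k \rho_i = \mathbb{I}$ and comparing with the decomposition $\mathbb{I}=|w\rangle\langle w|+|w^\bot\rangle\langle w^\bot|$ shows that both eigenvectors must occur the same number of times among the $\varphi_i$'s. Under the natural identification of rank-$1$ POVMs (treating duplicated entries as the same measurement), $\Pi$ then coincides with the PVM $\{|w\rangle\langle w|,\,|w^\bot\rangle\langle w^\bot|\}$ onto this eigenbasis of $U$.

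Conversely, if $\Pi$ is the PVM associated with an eigenbasis $\{w,w^\bot\}$ of $U$, then taking $W:=\spann\{w\}$ immediately falsifies condition~(iii) of Theorem~\ref{rank1coroll}, so $\mathbb{P}_*$ is non-ergodic. The only mildly delicate step is the bookkeeping in the forward direction: combining the eigenvector constraint on the $\varphi_i$'s with the POVM normalization to pin down $\Pi$ as the PVM of the eigenbasis. The rest is just the observation that in dimension two, $U$-invariance collapses to being an eigenvector, which is precisely why Theorem~\ref{rank1coroll} becomes so sharp here.
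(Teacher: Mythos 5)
Your argument is correct and is exactly the derivation the paper intends: Corollary~\ref{qubitkusuoka} is stated as an immediate consequence of Theorem~\ref{rank1coroll}, specializing (i)$\Leftrightarrow$(iii) to $d=2$, where every non-trivial invariant subspace is an eigenline and $W\cup W^\bot$ membership forces each $\varphi_i$ onto one of the two eigenlines. Your bookkeeping via the normalization $\sum_i\Pi_i=\mathbb{I}$ (and the collapse of duplicated effects, which is automatic since the paper defines a POVM as a \emph{set} of operators) correctly pins down $\Pi$ as the PVM of that eigenbasis, so there is nothing to add.
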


We now move on to consider the other class of measurements, i.e., the PVMs   consisting  of  exactly two projections, of which one has  rank $1$, and so the other has    rank   \mbox{$\,d-1$}.   
If $d>2$, then the latter measurement operator gives rise to a non-constant evolution map. In consequence, there may be infinitely many quantum states corresponding to the~same   measurement outcome, which, in principle, causes the symbolic dynamics to be non-Markovian. We start with a simple example of such a PVM producing a non-ergodic Kusuoka measure.

 \begin{ex}	Let $\hspace{0.25mm}U\hspace{-0.5mm} \in  \mathcal{U}(\C^d)$  and let $\Pi=\{\Pi_1, \Pi_2\}$  be a PVM 	such that  $\Pi_1$ and $\Pi_2$ are projections on  $\,\spann\{e_1,   \ldots, e_{d-1}\}$ and    $\,\spann\{e_{d}\}$, respectively, where $\{e_1, \ldots, e_d\}$ is an orthonormal eigenbasis of $\hspace{0.25mm}U$.  
  	
 		In the first measurement both   outcomes are achievable 
 		and their	probabilities are proportional to the dimensions of the respective subspaces:  		 	
		\vspace{-0.5mm}
 		\begin{equation*} 
 		\vspace{-0.5mm}
 		\mathsf{p}_{1}(\rho_*)  =\tfrac {1}{d} \tr(\Pi_1) =  \tfrac {d-1}{d}
 		\ \ \aand \ \ 
 		\mathsf{p}_{2}(\rho_*)  = \tfrac {1}{d} \tr(\Pi_2) =\tfrac {1}{d}.
	 		\end{equation*}
  		Provided that the outcome `$1$' or `$2$' has been obtained, the post-measurement state reads
		\vspace{-0.5mm}
$$
\vspace{-0.5mm}
 		\mathsf{F}_{1}(\rho_*)  =  \tfrac {1}{d-1}\Pi_1 
 		\ \ \textrm{ or }\ \ 
 		\mathsf{F}_{2}(\rho_*)  =  \Pi_2,
 		$$
 		respectively. 
 Next, the probability of the system emitting $j$, provided that the first measurement yielded~$i$, is equal to	
$\mathsf{p}_{j}(\mathsf{F}_{i}(\rho_*))=\delta_{ij}$, where  $\delta_{ij}$ denotes the Kronecker delta and $i,j \in \{1, 2\}$. 
Indeed, observe that if $\,i \neq j$, then 
$
\tr(\Pi_jU\Pi_iU^*)=\tr(\Pi_j\Pi_i)=0,
$
where  the first equality is due to the fact that $U$ and $\Pi_i$ share the eigenbasis $\{e_1, \ldots, e_d\}$, which implies that  
	$\hspace{0.15mm}U\Pi_iU^*\hspace{-0.15mm} = \Pi_i$, 
and the second follows from the mutual orthogonality of $\Pi_1$ and $\Pi_2$.
 	 		 
 		 Hence, the only possible sequences of measurement outcomes are the constant sequence of $\,1$'s, which is generated with probability $\,\tfrac {d-1}{d}$, and the constant sequence of $\,2$'s, generated with complementary probability $\tfrac 1d$.
 		That is, 
 	  $\mathbb{P}_* =  \tfrac {d-1}{d} \mathbb{P}_1 + \frac 1d \mathbb{P}_2$,  where   
 		$\mathbb{P}_i$ stands for the Dirac delta probability measure on $\{1,2\}^\mathbb{N}$ supported on the constant sequence of $i$'s, where   $i \hspace{-0.5mm}\in\hspace{-0.5mm} \{1, 2\}$.  Obviously, $\mathbb{P}_*$ is not ergodic.
 \end{ex} 
 
In the above example all  but one eigenvector of $\,U$ belong to $\operatorname{im}\hspace{-0.25mm}\Pi_1$, where $\Pi_1$ is assumed to be the projection of rank $\,d-1$. It turns out that the presence of an eigenvector of $\,U$ in $\operatorname{im}\hspace{-0.25mm}\Pi_1$ is equivalent to the non-ergodicity of $\,\mathbb{P}_*$, as we now show. 
Note that Theorem \ref{pvmergodic} is in fact the reverse of Theorem~\ref{ergodremark}. A crucial role in the proof is played by the following result:

  \begin{lemma}{\textrm{ \normalfont\cite[Lemma 1]{LAA}}}\label{lemma}
Let  $\,U\! \in \mathcal{U}(\C^d)$  and let  $z \in \C^d$ be a unit vector.   
 Put  	$\,\sigma(U)$  for the set of eigenvalues of $\:U$ and $P$ for the orthogonal projection on $\,\Theta:=\spann\{z\}^\bot$.
 	Then 
 	$$
 	  \vspace{0.5mm}
 	\lim\limits_{m \to \infty}\tr((P U)^m (P U)^{*m})=\sum_{\lambda \in \sigma(U)}\dim(\Theta \cap \operatorname{ker}(U-\lambda \mathbb{I})).$$  
 \end{lemma}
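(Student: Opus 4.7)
My plan is to identify the limit with $\dim V$, where $V$ is the largest $U$-invariant subspace of $\Theta$, and then observe that $V$ decomposes along the spectral projections of $U$ as $V=\bigoplus_{\lambda\in\sigma(U)}(\Theta\cap\ker(U-\lambda\mathbb{I}))$, which yields the claimed right-hand side. Writing $T:=PU$, cyclicity of the trace gives
\[
\tr(T^m T^{*m}) \;=\; \tr(T^{*m}T^m) \;=\; \sum_{i=1}^d \|T^m e_i\|^2
\]
for any orthonormal basis $\{e_i\}$ of $\C^d$, so the task reduces to controlling $\|T^m x\|$ as $m\to\infty$ along a cleverly chosen basis.

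First, for $x\in V$: since $V\subset\Theta$ and $U(V)=V$, one has $PUx=Ux\in V$, and by induction $T^m|_V = U^m|_V$, so $\|T^m x\|=\|x\|$ for every $m\geq 1$. For the complement I exploit that $T$ is a contraction on $\C^d$. By standard finite-dimensional spectral theory, the generalized eigenspaces of $T$ for eigenvalues with $|\lambda|<1$ are annihilated exponentially by $T^m$, whereas Jordan blocks on the unit circle must be trivial, because a non-trivial block would force $\|T^m\|$ to grow. Thus $\C^d = \mathcal{H}_1 \oplus \mathcal{H}_{<1}$, where $\mathcal{H}_1:=\bigoplus_{|\lambda|=1}\ker(T-\lambda\mathbb{I})$ and $\mathcal{H}_{<1}$ is the generalized eigenspace for the remaining eigenvalues.

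The bridge from $T$ back to $U$ is the identity $\ker(T-\lambda\mathbb{I})=\Theta\cap\ker(U-\lambda\mathbb{I})$ for $\lambda\in\sigma(U)$: if $Tx=\lambda x$ with $|\lambda|=1$, then $\|PUx\|=|\lambda|\|x\|=\|Ux\|$, which forces $Ux\in\Theta$ and hence $Ux=PUx=\lambda x$. So $\mathcal{H}_1 = V$, and $\dim V = \sum_{\lambda\in\sigma(U)}\dim(\Theta\cap\ker(U-\lambda\mathbb{I}))$. The same reasoning applied to $T^*=U^*P$ shows that the unit-circle eigenspaces of $T^*$ coincide with those of $T$, so the one-line computation $\langle x,y\rangle = \lambda^m\langle x,T^m y\rangle \to 0$ (for $x\in\mathcal{H}_1$ and $y\in\mathcal{H}_{<1}$) yields $\mathcal{H}_{<1}\perp\mathcal{H}_1$, whence $\mathcal{H}_{<1}=V^\perp$. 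Choosing an orthonormal basis adapted to $V\oplus V^\perp$ then gives $\tr(T^{*m}T^m) = \dim V + o(1)$, which is the desired formula. The main subtlety is the clean identification of the unit-circle eigenspaces of $T$ with the intersections $\Theta\cap\ker(U-\lambda\mathbb{I})$ (together with the orthogonality $\mathcal{H}_1\perp\mathcal{H}_{<1}$); once this is in place, the rest is finite-dimensional bookkeeping.
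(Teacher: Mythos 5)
The paper does not prove this lemma at all: it is imported verbatim as \cite[Lemma 1]{LAA}, so there is no in-paper argument to compare against. Judged on its own, your proof is correct and self-contained. The chain of ideas holds up: $T=PU$ is a contraction, so its unit-modulus generalized eigenspaces carry no nontrivial Jordan blocks; the equality case of $\|Pv\|\le\|v\|$ gives the key identification $\ker(T-\lambda\mathbb{I})=\Theta\cap\ker(U-\lambda\mathbb{I})$ for $|\lambda|=1$ (and the analogous computation for $T^*=U^*P$ shows these are simultaneously eigenspaces of $T^*$ with conjugate eigenvalue, which is exactly what makes the pairing $\lambda^m\langle x,y\rangle=\langle x,T^m y\rangle\to 0$ legitimate and forces $\mathcal{H}_1\perp\mathcal{H}_{<1}$); and since eigenspaces of the unitary $U$ for distinct eigenvalues are orthogonal, $\dim\mathcal{H}_1=\sum_{\lambda\in\sigma(U)}\dim(\Theta\cap\ker(U-\lambda\mathbb{I}))$, while the trace computed in a basis adapted to $\mathcal{H}_1\oplus\mathcal{H}_{<1}$ converges to $\dim\mathcal{H}_1$ because $T$ acts isometrically (as $U$) on $\mathcal{H}_1$ and with spectral radius strictly below $1$ on $\mathcal{H}_{<1}$. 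The only points stated rather than proved are standard (exponential decay on the strictly-contractive part, triviality of peripheral Jordan blocks of a contraction, and the fact that the largest $U$-invariant subspace of $\Theta$ decomposes along the eigenspaces of $U$), and none of them hides a genuine difficulty.
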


\begin{theorem}\label{pvmergodic} Let  $\hspace{0.25mm}U \hspace{-0.5mm}\in \mathcal{U}(\C^d)$  and let $\,\Pi=\{\Pi_1, \Pi_2\}$ be a PVM   such that  $\,\operatorname{rank}\Pi_1=d-1$ and    $\,\operatorname{rank}\Pi_2=1$.   Put $z$  for a unit vector that spans $\,\operatorname{im}\hspace{-0.25mm}\Pi_2$ and $\,\Theta:= \operatorname{im}\hspace{-0.25mm}\Pi_1 = \spann\{z\}^\bot$. \linebreak The following conditions are equivalent:
	\vspace{-0.25mm}
 	\begin{enumerate}[\rm (i)] 
 		\itemsep 0.5mm

	\item  $\mathbb{P}_*$  is not ergodic,
 		
 		\item there exists a non-trivial  subspace  of $\,\C^d\hspace{-0.5mm}$ 
 		invariant under $U\!$ and under $\hspace{0.25mm}\Pi_2$ 
 			{\rm(}and thus necessarily also under $\hspace{0.25mm}\Pi_1${\rm)},

 		\item 	$z$ belongs to a non-trivial   subspace of $\:\C^d$  invariant under $U$,
 
 		\item  an eigenvector  of $\:U$ belongs to $\,\Theta$.
   		 
 	\end{enumerate}  
 \end{theorem}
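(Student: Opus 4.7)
The plan is to first establish the linear-algebraic equivalences (ii) $\Leftrightarrow$ (iii) $\Leftrightarrow$ (iv) on invariant subspaces of $U$, and then close the loop with (i). The implication (i) $\Rightarrow$ (iv) comes for free by contrapositive from Theorem~\ref{ergodremark} combined with Theorem~\ref{projinvariant}; the only genuinely substantive step is (iv) $\Rightarrow$ (i), where I would invoke Lemma~\ref{lemma} to exhibit a positive-mass atom of $\mathbb{P}_*$.

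For (ii) $\Leftrightarrow$ (iii): a subspace $V$ is $\Pi_2$-invariant precisely when either $V \subset \Theta$ (so $\Pi_2(V) = \{0\}$) or $z \in V$ (so $\Pi_2(V) = \spann\{z\} \subset V$); note that $\Pi_1$-invariance is equivalent to $\Pi_2$-invariance via $\Pi_1 + \Pi_2 = \mathbb{I}$. Combined with $U$-invariance (which passes to $V^\bot$ since $U$ is an isometry), the first case furnishes a non-trivial $U$-invariant subspace $V^\bot \ni z$, and the second yields $V$ itself. Conversely, any non-trivial $U$-invariant subspace containing $z$ contains $\operatorname{im}\Pi_2$ and is automatically $\Pi_2$-invariant. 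For (iii) $\Leftrightarrow$ (iv): from a non-trivial $U$-invariant $V \ni z$, the orthogonal complement $V^\bot \subset \Theta$ is again non-trivial and $U$-invariant, so the unitary $U|_{V^\bot}$ admits an eigenvector, which lies in $\Theta$. Conversely, if $v \in \Theta$ is an eigenvector of $U$, then $\spann\{v\}^\bot$ is a non-trivial $U$-invariant subspace containing $z$ (since $v \perp z$).

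For (i) $\Rightarrow$ (iv), I would argue by contrapositive: if (iv) fails, then so does (ii), and Theorem~\ref{projinvariant} forces $\{\sqrt{\smash[b]{\Pi_1}}U, \sqrt{\smash[b]{\Pi_2}}U\}$ to be irreducible, whence Theorem~\ref{ergodremark} yields the ergodicity of $\mathbb{P}_*$, negating (i).

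The main obstacle is (iv) $\Rightarrow$ (i). Because $\Pi_1$ is an orthogonal projection, $\sqrt{\smash[b]{\Pi_1}} = \Pi_1$, so the probability of the length-$n$ constant-$1$ cylinder reads
\[
\mathbb{P}_*(C_{(1,\ldots,1)}) \;=\; \tfrac{1}{d}\,\bigl\|(U^*\Pi_1)^n\bigr\|_{\mathsf{HS}}^2 \;=\; \tfrac{1}{d}\,\tr\!\bigl((\Pi_1 U)^n (\Pi_1 U)^{*n}\bigr),
\]
where the last equality uses the cyclic property of the trace and $(\Pi_1 U)^* = U^*\Pi_1$. Applying Lemma~\ref{lemma} with $P = \Pi_1$ (so that the lemma's $\Theta$ is ours), this tends as $n \to \infty$ to $\tfrac{1}{d}\sum_{\lambda \in \sigma(U)}\dim(\Theta \cap \ker(U - \lambda\mathbb{I}))$, which is strictly positive precisely under (iv). Since the cylinders $C_{(1,\ldots,1)}$ decrease to the singleton $\{\omega\}$ with $\omega := (1,1,\ldots)$, we obtain $\mathbb{P}_*(\{\omega\}) > 0$; and $\mathbb{P}_*(\{\omega\}) \leq 1 - \mathbb{P}_*(C_{(2)}) = 1 - \tfrac{1}{d} < 1$ because $\omega \notin C_{(2)}$. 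Since $\omega$ is a shift-fixed point, $\delta_\omega$ is itself shift-invariant, and the decomposition
\[
\mathbb{P}_* \;=\; \mathbb{P}_*(\{\omega\})\,\delta_\omega + \bigl(1 - \mathbb{P}_*(\{\omega\})\bigr)\,\nu
\]
expresses $\mathbb{P}_*$ as a non-trivial convex combination of two distinct shift-invariant probability measures, where $\nu := (\mathbb{P}_* - \mathbb{P}_*(\{\omega\})\delta_\omega)/(1 - \mathbb{P}_*(\{\omega\}))$ is positive and shift-invariant by direct verification using $\sigma\omega = \omega$. This contradicts the extremality of $\mathbb{P}_*$ among shift-invariant measures, which is equivalent to ergodicity, and finishes (iv) $\Rightarrow$ (i). The delicate move is the passage from finite-cylinder mass to a genuine atom (justified because cylinders decrease to $\{\omega\}$) and then from that atom at a fixed point of the shift to non-extremality of $\mathbb{P}_*$.
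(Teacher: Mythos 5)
Your proposal is correct, and its overall architecture coincides with the paper's: the linear-algebraic equivalences (ii)$\Leftrightarrow$(iii)$\Leftrightarrow$(iv) are argued exactly as in the paper (via $\Pi_2(V)=\{0\}$ or $\spann\{z\}$, passage to $V^\bot$, and extension of an eigenbasis), the implication (i)$\Rightarrow$(iv) is obtained in both cases by contraposing Theorem~\ref{ergodremark} through the simplification of Theorem~\ref{projinvariant}, and the substantive implication (iv)$\Rightarrow$(i) rests in both cases on the same computation $\mathbb{P}_*(C_{\mathbf{1}^m})=\tfrac 1d\tr((\Pi_1U)^m(\Pi_1U)^{*m})$ fed into Lemma~\ref{lemma}. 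The one place you diverge is the final conversion of that limit into non-ergodicity: the paper exhibits the explicit shift-invariant tail set $\mathcal{W}$ of eventually-all-ones sequences and shows $0<\mathbb{P}_*(\mathcal{W})<1$, whereas you pass to the singleton atom $\{\omega\}$ at the shift-fixed point $\omega=(1,1,\ldots)$ (legitimate, since $C_{\mathbf{1}^m}\downarrow\{\omega\}$) and conclude via the non-extremality of $\mathbb{P}_*=\mathbb{P}_*(\{\omega\})\delta_\omega+(1-\mathbb{P}_*(\{\omega\}))\nu$. The two endings are essentially interchangeable --- by shift-invariance $\mathbb{P}_*(\mathcal{W})=\lim_n\mathbb{P}_*(\sigma^{-n}\{\omega\})=\mathbb{P}_*(\{\omega\})$, so they detect the same quantity --- but the paper's version is self-contained at the level of the definition of ergodicity, while yours trades the construction of the invariant set for the (standard) characterization of ergodic measures as extreme points; your upper bound $\mathbb{P}_*(\{\omega\})\le 1-\mathbb{P}_*(C_{(2)})=1-\tfrac1d$ is also a slicker substitute for the paper's bound $\sum_\lambda\dim(\Theta\cap\ker(U-\lambda\mathbb{I}))\le\dim\Theta=d-1$. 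No gaps.
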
 

\pagebreak

 \begin{proof}\phantom{x}

 	\begin{enumerate}[leftmargin=20.5mm]	\itemsep 0.5mm

\item	[(i) $\Rightarrow$ (ii)]  This implication follows from  Theorem~\ref{ergodremark} and Theorem~\ref{projinvariant}.

 		\item	[(ii) $\Rightarrow$ (iii)]
Let $W$ be a non-trivial subspace  of $\,\C^d$ invariant under $\hspace{0.25mm}U$ and under $\Pi_2$. 
Clearly, $W^\bot$ is  non-trivial and invariant under $U$ as well. It follows easily that $z \in W$ or  $z \in W^\bot$. Indeed, if $z \notin W^\bot$, then    $\spann\{z\} = \Pi_2(W) \subset W$, so $z \in W$, as desired. 
 		
 		\item	[(iii) $\Rightarrow$ (iv)]	
 
	Let $V$ be a non-trivial subspace  of $\,\C^d$ invariant under $\hspace{0.25mm}U$ and such that $z \in V$. \linebreak 
	We can choose an orthonormal basis $\mathcal{B}_V$ of $\hspace{0.25mm}V$ consisting of the eigenvectors of~$\hspace{0.25mm}U$. As $V^{\bot}$ is invariant under $U$ as well, we can extend   $\mathcal{B}_V$ to an orthonormal basis $\mathcal{B}$ of $\,\C^d$ consisting of the eigenvectors of $\,U$. Each vector from $\mathcal{B}\setminus \mathcal{B}_V$ is an eigenvector of $\,U$ orthogonal to $V$, thus also to $z$, which means that it lies in $\Theta$.

 		\item	[(iv) $\Rightarrow$ (i)] 	
 		Consider $\mathcal{W}:=\{(s_i)_{i=1}^\infty \in I_2^\N \colon    s_i = 1  \:   \textrm{ for almost all }\: i \in \N\}$. Obviously, $\mathcal{W}$ is invariant under the shift operator.
 		Putting 
 		$$ \hspace{15mm}
 		\mathcal{W}_{n,m}:=\{(s_i)_{i =1}^\infty \in I_2^\N \colon  s_{n+1} =\ldots= s_{n+m}=1\},$$
 		we have   $\hspace{0.25mm}\mathcal{W}=\bigcup_{n=0}^\infty \bigcap_{m=1}^\infty \mathcal{W}_{n,m}$, so 
 		from the continuity of  $\,\mathbb{P}_*$ we  obtain  
 		$$ \hspace{15mm}
 		\mathbb{P}_*(\mathcal{W}) = \lim_{n \to \infty }\lim_{m \to \infty} \mathbb{P}_*( \mathcal{W}_{n,m}).
 		$$
 Fix $m \hspace{-0.25mm}\in \hspace{-0.25mm}\N$ and $n\hspace{-0.25mm} \in\hspace{-0.25mm} \N \cup \{0\}$.  For strings consisting exclusively of 1's we adopt the notation 
  $\textbf{1}^m:=({1}, \ldots, 1) \in {I}^m_2$. Since $\mathcal{W}_{n,m}= 	\bigcup_{\kappa \in I_2^n}  C_{\kappa \mathbf{1}^m}$, we obtain
 		\begin{align*} \hspace{15mm}
 		\mathbb{P}_*(\mathcal{W}_{n,m})
 = 	\sum_{\kappa \in I_2^n}	\mathbb{P}_*( C_{\kappa \mathbf{1}^m})
 		=
 	\mathbb{P}_*(C_{\mathbf{1}^m})
&
 		=  \tr ((\Pi_1U)^m \rho_*  (U^*\Pi_1)^{ m} )
 		\\[-0.75em] 
 		&	
 		=	\tfrac 1d     \tr ((\Pi_1U)^m  (\Pi_1U)^{* m}),
 		\end{align*}
where the second equality follows from  \eqref{prop} and the third  from  \eqref{ptrace}. In consequence,  Lemma~\ref{lemma} gives 
 		\begin{align*}
 \hspace{15mm}		\mathbb{P}_*(\mathcal{W})&= \tfrac 1d   \lim_{m \to \infty}  \tr ((\Pi_1U)^m   (\Pi_1U)^{* m})  
 		= 
 		\tfrac 1d \hspace{-1mm} \sum\limits_{\lambda \in \sigma(U)} \hspace{-1mm} \dim(\Theta \cap  		\operatorname{ker}(U-\lambda \mathbb{I})).
 		\end{align*}
By assumption, there is an eigenvector  of  $\hspace{0.25mm}U$ in  $\Theta$. Denoting the corresponding eigenvalue by $\hspace{0.25mm}\tilde{\lambda}$, we obtain    \mbox{$\dim(\Theta \cap   \operatorname{ker}(U-\tilde{\lambda} \mathbb{I}))\geq 1$}; hence 
	$\mathbb{P}_*(\mathcal{W}) \geq \frac 1d>0$.
It remains to observe that  $\bigoplus_{\lambda \in \sigma(U)}(\Theta \cap \operatorname{ker}(U-\lambda \mathbb{I})) \subset \Theta$, and so
\vspace{-1mm}
	$$ \hspace{15mm}
	\sum_{\lambda \in \sigma(U)} \dim(\Theta \cap  		\operatorname{ker}(U-\lambda \mathbb{I})) \leq \dim\Theta = d-1;$$ hence, $\mathbb{P}_*(\mathcal{W}) \leq \frac {d-1}d<1$, which concludes the proof. 
\qedhere
 	\end{enumerate}
 \end{proof}
  
Finally,  we show that PVMs consisting of two projections with ranks equal to  $d-1$ and~$1$, respectively, lead to Kusuoka measures that are reversible in the sense that any given cylinder set has the same measure as the cylinder set corresponding to the reverse string. In other words, the probability of the system outputting  any given string  of measurement outcomes coincides with the probability of it producing these outcomes in reverse order. 
Before moving on to the proof of this claim, we  note that  reversibility is a stronger property than shift-invariance. Indeed, assume that $\,\mathbb{P}_*(C_{(j_1, \ldots, j_m)}) \hspace{-0.5mm} = \mathbb{P}_*(C_{(j_m, \ldots, j_1)})$ for every  $(j_1, \ldots, j_m) \hspace{-0.5mm}\in\hspace{-0.5mm} {I}_k^m$,   $m\in \N$. Then for every $(i_1, \ldots, i_n) \in {I}_k^n$, $n\in \N$, we obtain  
\vspace{-0.5mm}
\begin{align*}
	\sum_{r \in I_k} \mathbb{P}_*(C_{(r, i_1, \ldots, i_n)})
	= \sum_{r \in I_k} \mathbb{P}_*(C_{(i_n, \ldots, i_1, r)})
	\overset{\eqref{totalprob}}{=}  \mathbb{P}_*(C_{(i_n, \ldots, i_1)})
	=  \mathbb{P}_*(C_{(i_1, \ldots, i_n)}).\end{align*}

\vspace{0.25mm}

\begin{theorem}\label{reversible} Let   $\hspace{0.25mm}U\! \in \mathcal{U}(\C^d)$  and   let  $\,\Pi\hspace{-0.5mm}=\hspace{-0.5mm}\{\Pi_1, \Pi_2\}$ be a PVM such that  $\,\operatorname{rank}\Pi_1=d-1$ and  $\operatorname{rank}\Pi_2=1$.  Then 
	$\:\mathbb{P}_*(C_{(i_1, \ldots, i_n)}) = \mathbb{P}_*(C_{(i_n, \ldots, i_1)})\,$ for every  $\,(i_1, \ldots, i_n)\in I_2^n$, $n\in \N$.
\end{theorem}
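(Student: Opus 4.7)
My plan is to reduce reversibility to a single scalar identity about $\|(\Pi_1 U)^m z\|$ by exploiting the rank-$1$ structure of $\Pi_2$. Using $\sqrt{\Pi_j}=\Pi_j$, $UU^*=\mathbb{I}$, and $\Pi_j^2=\Pi_j$ to collapse the outermost unitaries and the central $\Pi_{i_1}^2$ in \eqref{ptrace}, I first rewrite
$$d\cdot\mathbb{P}_*(C_\iota)=\|F_\iota\|_{\textsf{HS}}^{\,2},\qquad F_\iota:=\Pi_{i_n}U\Pi_{i_{n-1}}U\cdots U\Pi_{i_2}U\Pi_{i_1},$$
with projections at both ends. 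Letting $z$ be a unit vector with $\Pi_2=|z\rangle\langle z|$ and denoting the positions of $2$ in $\iota$ by $p_1<\dots<p_s$ (the case $s=0$ is trivial as $\iota=\tilde\iota$), I set $a_l:=p_{l+1}-p_l-1$. Because $\Pi_2 A\Pi_2=\langle z,Az\rangle\Pi_2$, every block of $F_\iota$ wedged between two consecutive $\Pi_2$'s (which has the form $U(\Pi_1 U)^{a_l}$) collapses to a scalar $c_l:=\langle z,U(\Pi_1 U)^{a_l}z\rangle$, and what survives at the outer ends is a head $(\Pi_1 U)^{n-p_s}$ and a tail $(U\Pi_1)^{p_1-1}$. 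This gives
$$F_\iota=\Bigl(\prod_{l=1}^{s-1}c_l\Bigr)\,|h_\iota\rangle\langle t_\iota|,\quad |h_\iota\rangle:=(\Pi_1 U)^{n-p_s}|z\rangle,\ |t_\iota\rangle:=(\Pi_1 U^*)^{p_1-1}|z\rangle,$$
hence $\|F_\iota\|_{\textsf{HS}}^{\,2}=\bigl|\prod_l c_l\bigr|^2\,\|h_\iota\|^2\|t_\iota\|^2$.

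A direct index check shows that passing to $\tilde\iota$ merely permutes the multiset $\{a_l\}_{l=1}^{s-1}$ (so $\bigl|\prod_l c_l\bigr|^2$ is unchanged) and swaps the two exponents $n-p_s$ and $p_1-1$. Reversibility therefore reduces to
$$\|(\Pi_1 U)^m z\|^2=\|(\Pi_1 U^*)^m z\|^2\qquad(m\geq 0),$$
which I prove by induction on $m$. From $\Pi_1=\mathbb{I}-|z\rangle\langle z|$ and $\|\Pi_1 v\|^2=\|v\|^2-|\langle z,v\rangle|^2$, together with unitarity of $U$, the two sides satisfy
\begin{align*}
\|(\Pi_1 U)^m z\|^2&=\|(\Pi_1 U)^{m-1}z\|^2-\bigl|\langle z,U(\Pi_1 U)^{m-1}z\rangle\bigr|^2,\\
\|(\Pi_1 U^*)^m z\|^2&=\|(\Pi_1 U^*)^{m-1}z\|^2-\bigl|\langle z,U^*(\Pi_1 U^*)^{m-1}z\rangle\bigr|^2,
\end{align*}
and the operator identity $\bigl(U(\Pi_1 U)^{m-1}\bigr)^{*}=U^{*}(\Pi_1 U^{*})^{m-1}$ forces the two inner products to be complex conjugates of each other, so the subtracted terms coincide and the induction closes.

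The main obstacle I expect is the bookkeeping in the factorization step: verifying that under reversal the multiset of gaps $\{a_l\}_{l=1}^{s-1}$ is truly only permuted and that the head/tail exponents swap correctly. Once that is in place, the displayed scalar identity is the only analytic input, and it is precisely where the hypothesis $\operatorname{rank}\Pi_2=1$ enters (via $\Pi_2=|z\rangle\langle z|$).
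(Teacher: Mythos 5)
Your proof is correct, and while its overall architecture matches the paper's --- both factor the string at the occurrences of the symbol $2$, using the rank-$1$ reset $\Pi_2 A \Pi_2 = \braket{z, Az}\Pi_2$ so that reversal merely permutes the interior block factors and swaps the two boundary blocks --- the key scalar identity you reduce to, and its proof, are genuinely different. The paper stays inside the PIFS probability formalism: it reduces reversibility to the single identity $\mathsf{p}_{(1,\ldots,1,2)}(\rho_*) = \tfrac 1d\, \mathsf{p}_{(1,\ldots,1)}(\Pi_2)$, proved by a telescoping trace computation that substitutes $\Pi_2 = \mathbb{I}-\Pi_1$ and invokes the total probability formula. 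You instead work directly with the Hilbert--Schmidt norm of the operator product, collapse it to a scalar multiple of the rank-one operator $|h_\iota\rangle\langle t_\iota|$, and reduce everything to the vector-norm identity $\|(\Pi_1 U)^m z\| = \|(\Pi_1 U^*)^m z\|$, which you prove by induction via $\|\Pi_1 v\|^2 = \|v\|^2 - |\braket{z,v}|^2$ and the conjugate symmetry $\braket{z, U^*(\Pi_1U^*)^{m-1}z} = \overline{\braket{z, U(\Pi_1U)^{m-1}z}}$. I checked the bookkeeping you flagged as the main risk: for the reversed string the gap multiset satisfies $\tilde a_l = a_{s-l}$ and the head/tail exponents $n-p_s$ and $p_1-1$ do swap, so the reduction is sound. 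Your route buys a cleaner structural picture (the probability of any string is, up to the scalars $c_l$, a product of two squared vector norms, which makes the $U \leftrightarrow U^*$ symmetry transparent), whereas the paper's route stays uniform with its PIFS machinery (Facts 1 and 2) and avoids introducing the bra--ket collapse explicitly; both use the hypothesis $\operatorname{rank}\Pi_2=1$ in exactly the same place, namely the reset to the pure state $\Pi_2$.
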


\begin{proof} Since $\Pi_2$ is a rank-$1$ projection,  	$\mathsf{F}_{2}(\rho)  =  \Pi_2$ for every $\rho \in \scd$ such that  $\mathsf{p}_2(\rho)> 0$. We begin by proving two simple facts. 
	
	\vspace{1mm}
	
	\noindent
	\underline{Fact 1.} 
	Let  $\rho\hspace{-0.25mm} \in \hspace{-0.25mm}\scd$ and $(j_1, \ldots, j_m) \hspace{-0.25mm}\in \hspace{-0.25mm} I_2^m$, $m \hspace{-0.25mm}\in \hspace{-0.25mm}\N$. Assume that for some $r \hspace{-0.25mm}\in\hspace{-0.25mm} \{1, \ldots m-1\}$ we have  $\,j_r=2\,$. Then 
	\vspace{-1.5mm}
	\begin{equation}\label{obsinv1} 
	\vspace{-0.5mm}
	\mathsf{p}_{(j_1, \ldots, j_m)}(\rho)=\mathsf{p}_{(j_1, \ldots, j_r)}(\rho)\mathsf{p}_{(j_{r+1}, \ldots, j_m)}(\Pi_2).\end{equation}
Indeed, if $\mathsf{p}_{(j_1, \ldots, j_r)}(\rho)>0$, then $\mathsf{F}_{(j_1, \ldots, j_r)}(\rho)=\Pi_2$, and the repeated application of \eqref{genprob}  yields the desired formula. 
Similarly, if $\mathsf{p}_{(j_1, \ldots, j_r)}(\rho)=0$, then \eqref{genprob} implies that $\mathsf{p}_{(j_1, \ldots, j_m)}(\rho)=0$.

	\vspace{1mm}
	
	\noindent
	\underline{Fact 2.} Let  $m\in \N$.	We have
 	\vspace{-2mm}	
	\begin{equation}
	\vspace{-1mm}
	\label{obsinv2}    \mathsf{p}_{(\underbrace{\scriptstyle 1, \ldots, 1}_{m},2)}(\rho_*)=\tfrac 1 d  \mathsf{p}_{(\underbrace{\scriptstyle 1, \ldots, 1}_{m})}(\Pi_2). \end{equation}
Indeed, 
it follows that 
 	\vspace{-1mm}	
  $$
  	\vspace{-1mm}	
  \mathsf{p}_{(2,\underbrace{\scriptstyle 1, \ldots, 1}_{m})}(\rho_*)=\mathsf{p}_{2}(\rho_*) \mathsf{p}_{(\underbrace{\scriptstyle 1, \ldots, 1}_{m})}(\Pi_2)  = \tfrac 1d\mathsf{p}_{(\underbrace{\scriptstyle 1, \ldots, 1}_{m})}(\Pi_2),$$ 
 where we first use \eqref{obsinv1} and then the fact that 
 $\mathsf{p}_{2}(\rho_*)  = \tfrac {1}{d} \tr(\Pi_2) =\tfrac {1}{d}$.  
  It remains to show  that   
	$\mathsf{p}_{(2,\underbrace{\scriptstyle 1, \ldots, 1}_{m})}(\rho_*)=	\mathsf{p}_{(\underbrace{\scriptstyle 1, \ldots, 1}_{m},2)}(\rho_*)$. Since $\Pi_2=\mathbb{I}-\Pi_1$, from       \eqref{ptrace} we have  
	\begin{align*}  
	\mathsf{p}_{(2,\underbrace{\scriptstyle 1, \ldots, 1}_{m})}(\rho_*)& =\tr(\Pi_1
	U \cdots 	\Pi_1	U\Pi_2U \rho_*  U^*\Pi_2U^*\Pi_1	\cdots U^* {\Pi_{1}}  )
	\\[-0.95em]&= \tfrac 1d \tr( {\Pi_1}	U\cdots \Pi_1	U\Pi_2U^*\Pi_1
	\cdots U^*{\Pi_{1}}  ) 	
	\\[0.15em]&= 	\tfrac 1d \tr(
	\Pi_1U\cdots  {\Pi_{1}}U ({\mathbb{I}}-\Pi_1) U^* {\Pi_{1}}
	\cdots U^*{\Pi_{1}}   )
	\\[0.15em]&=   \tr(
	\Pi_1U\cdots  {\Pi_{1}}U \rho_*   U^* {\Pi_{1}}
	\cdots U^*{\Pi_{1}}   ) 
		\\[-1.75em] & \hspace{35mm }-\tr(
	\Pi_1U\cdots  {\Pi_{1}}U  (\overbrace{\Pi_1 U \rho_*  U^* \Pi_1}^{
		\scaleto{\tfrac{1}{d}}{10pt}
		  \Pi_1}) U^* {\Pi_{1}}
	\cdots U^*{\Pi_{1}}   )
	\\[-0.15 em] &	= \mathsf{p}_{(\underbrace{\scriptstyle 1, \ldots, 1}_{m})}(\rho_*) -  \mathsf{p}_{(\underbrace{\scriptstyle 1, \ldots, 1}_{m+1})}(\rho_*)
	\\[-0.6em] & = \mathsf{p}_{(\underbrace{\scriptstyle 1, \ldots, 1}_{m}, 2)}(\rho_*),
	\end{align*}
	where the last equality follows from \eqref{totalprob}. We conclude	that  \eqref{obsinv2} holds.
	
	\vspace{1mm}
	
	Also, we let $\epsilon$ stand for the empty string and define  $\mathsf{p}_{\epsilon}(\rho):=1$ for every $\rho\in \scd$. 
	We have
	$\mathsf{p}_{2}(\rho_*)=  \frac{1}{d}=\frac{1}{d}\mathsf{p}_{\epsilon}(\Pi_2)$,
	and so \eqref{obsinv2} holds  for $m=0$ as well.
   
	Now, we fix $\iota=(i_1, \ldots, i_n)\in {I}_2^n$,  $n\in \N$. If $n=1$ or $\iota$ is a string of identical symbols, then the assertion of the theorem holds trivially. We therefore assume that $n \geq 2$ and that both   symbols `$1$' and `$2$' appear in $\iota$. 
	Let $1 \leq {j_1}<{j_2}< \ldots< {j_s}\leq n$, where $s \in \{1, \ldots, n-1\}$, stand for the positions in $\iota=(i_1, \ldots, i_n)$ occupied by $2$, and denote by $l_r:={j_{r+1}}-{j_r}-1$ the number of times that $1$ appears between the \mbox{$r$-th} and \mbox{$(r+1)$-th} occurrence of $2$, where $r \in \{1,\ldots,  s-1\}$. Moreover, put $l_0:={j_1}-1$ and $l_s:=n-{j_s}$ for the number  of $1$'s that appear before the first and after the last appearance of $2$ in $\iota$, respectively. That is, we have 
	$$
	\vspace{1mm}
	\iota=(i_1, \ldots, i_n)=(\underbrace{ 1, \ldots, 1}_{l_0}, \overarrow[2]{$i_{j_1}$} ,\underbrace{ 1, \ldots, 1}_{l_1}, \overarrow[2]{$i_{j_2}$},   \ldots \ldots , \overarrow[2]{$i_{j_{s-1}}$}, \underbrace{ 1, \ldots, 1}_{l_{s-1}}, \overarrow[2]{$i_{j_s}$}, \underbrace{ 1, \ldots, 1}_{l_s}).$$

\noindent	
	The repeated application of \eqref{obsinv1} yields  the following factorization
	\begin{align*}	
		\vspace{1mm}
		\mathbb{P}_*(C_{(i_1, \ldots, i_n)}) & = \mathsf{p}_{(i_1, \ldots, i_{j_1})}(\rho_*)\:  \mathsf{p}_{(i_{j_1+1}, \ldots, i_{j_2})}(\Pi_2)\cdot \ldots \cdot \mathsf{p}_{(i_{j_{s-1}+1}, \ldots, i_{j_s})}(\Pi_2) \: \mathsf{p}_{(i_{j_s+1}, \ldots, i_{n})}(\Pi_2)\\&=\mathsf{p}_{(\underbrace{\scriptstyle 1, \ldots, 1}_{l_0},2)}(\rho_*)\: \mathsf{p}_{(\underbrace{\scriptstyle 1, \ldots, 1}_{l_1},2)}(\Pi_2)\cdot \ldots \cdot \mathsf{p}_{(\underbrace{\scriptstyle 1, \ldots, 1}_{l_{s-1}},2)}(\Pi_2)\: \mathsf{p}_{(\underbrace{\scriptstyle 1, \ldots, 1}_{l_s})}(\Pi_2),\end{align*}
	while for the reversed string we obtain
	\begin{align*}
	\vspace{1mm}	
	\mathbb{P}_*(C_{(i_n, \ldots, i_1)})&=\mathsf{p}_{(i_n, \ldots, i_{j_s})}(\rho_*)\: \mathsf{p}_{(i_{j_s-1}, \ldots, i_{j_{s-1}})}(\Pi_2)\cdot \ldots \cdot \mathsf{p}_{(i_{j_{2}-1}, \ldots, i_{j_1})}(\Pi_2) \:  \mathsf{p}_{(i_{j_1-1}, \ldots, i_{1})}(\Pi_2)\\&=\mathsf{p}_{(\underbrace{\scriptstyle 1, \ldots, 1}_{l_s},2)}(\rho_*)\: \mathsf{p}_{(\underbrace{\scriptstyle 1, \ldots, 1}_{l_{s-1}},2)}(\Pi_2)\cdot \ldots \cdot \mathsf{p}_{(\underbrace{\scriptstyle 1, \ldots, 1}_{l_{1}},2)}(\Pi_2)\:  \mathsf{p}_{(\underbrace{\scriptstyle 1, \ldots, 1}_{l_0})}(\Pi_2).
	\end{align*}
	Clearly, if $\hspace{0.15mm}\mathsf{p}_{(\underbrace{\scriptstyle 1, \ldots, 1}_{l_{r}},2)}(\Pi_2)\hspace{-0.35mm} = \hspace{-0.35mm} 0\hspace{0.15mm}$ for some $\hspace{0.15mm}r \hspace{-0.5mm}\in \hspace{-0.5mm}\{1, \ldots, s-1\}$, then  $\hspace{0.15mm}\mathbb{P}_*(C_{(i_1, \ldots, i_n)}) \hspace{-0.35mm}
	= \hspace{-0.35mm}\mathbb{P}_*(C_{(i_n, \ldots, i_1)})\hspace{-0.35mm}=\hspace{-0.35mm}0$.
	Otherwise, the desired equality $\mathbb{P}_*(C_{(i_1, \ldots, i_n)}) 
	= \mathbb{P}_*(C_{(i_n, \ldots, i_1)})$ is equivalent to 
	\begin{equation}
		\label{dowinverted}
		\vspace{1mm}
	\mathsf{p}_{(\underbrace{\scriptstyle 1, \ldots, 1}_{l_0},2)}(\rho_*)\, \mathsf{p}_{(\underbrace{\scriptstyle 1, \ldots, 1}_{l_s})}(\Pi_2)=\mathsf{p}_{(\underbrace{\scriptstyle 1, \ldots, 1}_{l_s},2)}(\rho_*) \,\mathsf{p}_{(\underbrace{\scriptstyle 1, \ldots, 1}_{l_0})}(\Pi_2).\end{equation}
	To conclude the proof, it remains to observe that \eqref{obsinv2} implies that both sides of~\eqref{dowinverted} are equal to $\frac 1d  \mathsf{p}_{(\underbrace{\scriptstyle 1, \ldots, 1}_{l_0})}(\Pi_2)\,\mathsf{p}_{(\underbrace{\scriptstyle 1, \ldots, 1}_{l_s})}(\Pi_2)$.
\end{proof}

\section*{Acknowledgments}

The author is grateful to Wojciech S{\l}omczy{\'{n}}ski for numerous  helpful  comments and suggestions that greatly improved the presentation of this paper. Financial support of  the Polish National Science Centre under Project No. 2016/21/D/ST1/02414 is acknowledged.

\setlength{\bibitemsep}{0.3\baselineskip}         

\printbibliography

\end{document}